\newtheorem*{remark}{Remark}
\newtheorem{theorem}{Theorem}[section]
\newtheorem{proposition}[theorem]{Proposition}
\newtheorem{lemma}[theorem]{Lemma}
\newtheorem{corollary}[theorem]{Corollary}
\newtheorem{definition}{Definition}[section]
\newcommand*{\bu}{\bm u} \newcommand*{\bv}{\bm v} \newcommand*{\bU}{\bm U}
\title[Atractor for NS with the dynamic slip in channel]{On the attractor for 2D Navier-Stokes-like system \\with the dynamic slip boundary condition in a channel}
\thanks{The author thanks the Czech Science Foundation, 
Grant Number 20-11027X, for its support.}
\author[M. Zelina]{Michael Zelina}
\address{Charles University, Faculty of Mathematics and Physics, Department of Mathematical Analysis, Sokolovsk\'{a}~83, 186~75~Prague~8, Czech~Republic}
\email{zelina@karlin.mff.cuni.cz}
\keywords{Navier-Stokes, dynamic slip boundary condition, 
strong solution, regularity, attractor dimension}
\subjclass[2000]{76D05, 35B65, 37L30}
\date{}
\begin{document}

\begin{abstract}
We consider a 2D infinite channel domain with an incompressible fluid satisfying the so-called dynamic slip boundary condition on the (part of the) boundary. Introducing an exhaustion by a sequence of bounded sub-domains of the whole channel we show that the unique weak solution is strong. We then construct the global attractor and find an explicit upper bound of its fractal dimension with regard to the physical parameters. This result is compatible with the analogous estimate in the case of the Dirichlet boundary condition.
\end{abstract}

\maketitle

\section{Introduction}
In this paper, we investigate the usual Navier-Stokes-like equations with viscous part of Cauchy stress $\mathbb{S}$ in the channel $\mathbb{R} \times (0, L)$. On the lower part of its boundary, we consider the following form of the dynamic slip boundary condition: 
\begin{align*}
\beta \partial_t \bu +  \alpha \bm s(\bu)  + \big[ \mathbb{S}(D \bu) \bm n \big]_{\tau} &= \beta \bm h , \\
\bu \cdot \bm n &= 0 .
\end{align*}
Here $\alpha$, $\beta$ are positive parameters, $\bm s$ is monotone non-linearity, $[\cdot]_\tau$ represents the tangential projection of a given vector, $D \bu$ denotes the symmetric gradient of $\bu$, $\bm n$ is a unit outward normal vector, and $\bm h$ is a prescribed boundary force. The most elementary case is Navier-Stokes system, i.e. $\mathbb{S} (D \bu) = 2\nu D\bu$ with $\nu > 0$ being the kinematic viscosity, together with just $\bm s (\bu) = 2\nu \bu$. This type of boundary condition is known in polymer science since the presence of the time derivative substantially helps to model situations where slip velocity might depend on the past deformation. An overview, including numerical experiments, on this and related models can be found in \cite{Hatz12}.

In recent work \cite{ABM2021}, see also \cite{EM-dis}, the existence theory for this problem was established for bounded $\Omega$. It covers a rather general class of relations between the stress tensor $\mathbb{S}$ and the shear rate $D\bu$ both of the polynomial type (Ladyzhenskaya fluid) and even implicit constitutive relations. Analogous relations between $\bm s$ and $\bu$ are considered. Let us also note that in bounded domains the existence of finite-dimensional attractors was investigated in \cite{PrZe24x2} for both 2D and 3D Ladyzhenskaya type fluid with dynamic slip boundary condition. Detailed dimension estimate was done in \cite{PrZe24} and \cite{PrPr23} for 2D and 3D cases, respectively.

We aim to extend these results into the context of unbounded domains. We focus only on $\Omega = \mathbb{R} \times (0, L)$, but the method should work in more general channel-like domains, i.e. with one bounded direction. This specific domain is interesting as due to \cite{Rosa} and \cite{IPZ2016} we have a concrete dimension estimate of the attractor in $\Omega$ with Dirichlet boundary conditions on both parts of the boundary. Hence, we would like to see how the parameters $\alpha$ and $\beta$ influence analogous estimate of the attractor dimension. Let us point out that for $\alpha \to +\infty$ our boundary condition reduces to no-slip, i.e. zero Dirichlet, boundary condition, so we expect some kind of agreement in these results.

Our paper is organized as follows. In the first section, we formulate the problem together with the needed function spaces and the notion of the solution; the exposition follows that of \cite{ABM2021} and \cite{EM-dis}. All main theorems are summarized here together with a couple of remarks. 

As there is no existence proof for such a problem in infinite domains, we thus continue in the subsequent section by proving necessary properties regarding our function spaces to show that our problem is meaningful. These are basically versions of Korn's inequality and thanks to our simple geometry we can find explicit constants in these estimates. This will be useful when finding an upper bound of the attractor dimension as the impact of parameters $\alpha$ and $\beta$ will be clear there. Compared with \cite{PrZe24}, our constants are specific and uniform in $\alpha$.

In the third section, we introduce a sequence of expanding bounded domains $Q_n$, see e.g. \cite{RRS2016} for a similar idea to construct the solution in the whole space. In these domains, a solution with $L^{\infty}(0, T; W^{1,2}(Q_n))$-regularity is then constructed just like in \cite{PrZe24}. Fortunately, all estimates are uniform and we pass to the solution in $\Omega$. Using bootstrap argument relying on \cite{PrZe25}, an unbounded analogue of \cite{AACG21}, we achieve even higher regularity in the space. 

Finally, the most important section is devoted to attractors. Just like in \cite{Rosa} we can construct the attractor and then use the method of Lyapunov exponents to find an explicit upper bound of its fractal dimension. From an extensive literature on the topic let us mention books \cite{Te97}, \cite{Robinson01}, \cite{Robinson11}, which present theory for bounded $\Omega$ (i.e. there is a compactness of embeddings), for the unbounded domains let us mention article \cite{Abergel} and its extension \cite{Rosa} for more general forces. Further, in \cite{ChepIl04} or \cite{ChepIl01} was found the same upper bound for both fractal and Hausdorff dimensions. Finally, the work \cite{IPZ2016} gives explicit majorants for constants in this upper bound. For periodic boundary condition the estimate can be substantially improved, see \cite{CFT88}, and in \cite{Liu93} is shown that it is optimal up to a logarithmic term. Finding specific constants in these estimates is related to the celebrated Lieb-Thirring inequalities, see e.g. \cite{GMT88}.

We close the introduction by briefly commenting on some possible future interests. First, it should be possible to extend our results into a non-straight channel-like domain. Next, while we covered the existence of the solution also for the 3D domain in the form $\mathbb{R}^2 \times (0, L)$, we did not establish the existence of finite-dimensional attractor here. Finally, the most interesting goal would be to estimate the attractor dimension from below. However, this is still an open question even in the simplest Dirichlet setting in a channel.

\subsection{Problem formulation}
By $\mathbb{S} (\cdot) : \mathbb{R}_{\text{sym}}^{2\times 2} \to \mathbb{R}_{\text{sym}}^{2\times 2}$ we denote the viscous part of Cauchy stress which possess the potential $U \in \mathcal{C}^2([0, +\infty))$, $U(0) = 0$, satisfying $\mathbb{S} (\mathbb{D}) = \partial_{\mathbb{D}} U (|\mathbb{D}|^2) = 2U'(|\mathbb{D}|)\mathbb{D}$. We also require the following coercivity and growth conditions (for all symmetric $2 \times 2$ matrices $\mathbb{D}$, $\mathbb{E}$)
\begin{align}
( \mathbb{S} (\mathbb{D}) - \mathbb{S} (\mathbb{E}) ) : (\mathbb{D} - \mathbb{E}) 
& \geq c_1 |\mathbb{D} - \mathbb{E}|^2 , \label{eq:Coercivity} \\
| \mathbb{S} (\mathbb{D}) | 
& \leq c_2 |\mathbb{D}| , \label{eq:Growth} \\
\partial_{\mathbb{D}} \mathbb{S} (\mathbb{D})  \mathbb{E} : \mathbb{E} 
& \geq c_3 |\mathbb{E}|^2 .  \label{eq:CoercivityDerivative}
\end{align}
Typical example is $\mathbb{S} (\mathbb{D}) = \nu (|\mathbb{D}|^2) \mathbb{D}$ for some bounded shear-dependent function $\nu (\cdot)$. Let us note that after a small modification of arguments, we can also work with $\mathbb{S}(\mathbb{D}) = \mathbb{A}(t, x, y)\mathbb{D} $, where $\mathbb{A} \in \mathcal{C}^1([0, T]; L^{\infty} (\mathbb{R}^2))$ is symmetric matrix.

On the boundary, we consider an analogue non-linearity $\bm s(\cdot) : \mathbb{R}^2 \to \mathbb{R}^2$, which is continuously differentiable, $\bm s(0) = 0$, and for all $\bu$, $\bv \in \mathbb{R}^2$ satisfies
\begin{align}
( \bm s (\bu) - \bm s (\bv) ) \cdot (\bu - \bv) 
& \geq c_1  |\bu - \bv|^2 , \label{eq:CoercivityBoundary} \\
| \bm s (\bu) | 
& \leq c_2  |\bu | , \label{eq:GrowthBoundary} \\
\nabla \bm s(\bu) \bv \cdot \bv
& \geq c_3 |\bv|^2 .  \label{eq:CoercivityDerivativeBoundary}
\end{align}
We point out that for $\mathbb{S} (\mathbb{D}) = 2\nu \mathbb{D}$ and $\bm s (\bu) = 2 \nu \bu$, where $\nu$ is positive constant, we obtain the setting of Navier-Stokes system with a linear form of the dynamic slip boundary condition. The attractor dimension for this particular problem is investigated in the fourth section.

Let us consider domain $\Omega_L = \mathbb{R} \times (0, L) \ni (x, y)$, where $L > 0$ is width of the channel. Its boundary consists of the lower part $\Gamma =  \mathbb{R} \times \{ 0 \}$ and the upper part $\Gamma_L =  \mathbb{R} \times \{ L \}$. By $\bm n = (0, \pm 1)$ we denote respective unit outward normal vectors and $[\cdot]_\tau$ represents the tangential projection of a given vector. For a prescribed time $T > 0$ and parameters $\alpha$, $\beta$, we consider the system 
\begin{align}
\partial_t \bu - \text{div}\, \mathbb{S} (D\bu)  + (\bu \cdot \nabla)\bu + \nabla \pi &=    \bm f \text{ in } (0, T) \times \Omega_L ,  \label{eq:EquationInside} \\
\beta \partial_t \bu + [ \alpha \bm s (\bu) + \mathbb{S} (D\bu) \bm n ]_\tau   &=  \beta  \bm h \text{ on } (0, T) \times \Gamma ,  \label{eq:EquationBoundary}
\end{align}
where $D\bu$ denotes the symetric gradient of $\bu$, together with
\begin{align}
\text{div}\, \bu &= 0 \text{ in } (0, T) \times \Omega_L , \label{eq:Divergence} \\
\bu &= 0  \text{ on } (0, T) \times \Gamma_L ,  \\
\bu \cdot \bm n &= 0 \text{ on } (0, T) \times ( \Gamma \cup \Gamma_L ) ,  \\
\bu(0, \cdot) &= \bu_0 (\cdot) \text{ in } \overline{ \Omega }_L .  \label{eq:Initial}
\end{align}

In our work, we will need to pay close attention to the boundary terms, we thus introduce the following subspace of smooth functions and use it to define spaces that will naturally arise in apriori estimates.
\begin{definition}
For $L > 0$, we denote
\begin{align*}
\mathcal{V}(\Omega_L) = \{ \bu \in \mathcal{C}^{\infty} (\overline{\Omega_L}); \text{ div}\, \bu = 0 \text{ in } & \Omega_L, \bu \cdot \bm n = 0 \text{ on } \Gamma , \text{supp}\, \bu \subset \Omega \cup \Gamma \text{ is compact}  \} 
\end{align*}
and introduce (for $\bu \in \mathcal{V}(\Omega_L)$ we understand $\bm g = \bu\restriction_{\Gamma}$)
\begin{align*}
V_L &= \overline{\mathcal{V}(\Omega_L)}^{V_L}, \text{ where }
|| ( \bu , \bm g) ||_{V_L}^2 = || D \bu ||_{L^2(\Omega_L)}^2  + \alpha || \bm g ||_{L^2(\Gamma)}^2,  \\
H_L &= \overline{\mathcal{V}(\Omega_L)}^{H_L}, \text{ where }
|| ( \bu , \bm g) ||_{H_L}^2 = || \bu ||_{L^2( \Omega_L)}^2 + \beta || \bm g ||_{L^2(\Gamma)}^2.
\end{align*}
For $L = 1$ we omit the lower index.
\end{definition}
Both $H_L$ and $V_L$ are Hilbert spaces with natural inner product. Further, in section 2.1, we show that $V_L$ is embedded into $W^{1,2} (\Omega_L)$. Thus, if $(\bu, \bm g) \in V_L$, then $\bm g$ is actually trace of $\bu$; hence, we will often write just $\bu \in V_L$. On the other hand, it is not the case for $(\bu, \bm g) \in H_L$, i.e. $\bu$ and $\bm g$ are not connected via trace operator in general. We will also observe that the spaces $V_L$, $H_L$, $V_L^*$ form the Gelfand triplet, which gives meaning to the duality pairing $\langle \cdot , \cdot \rangle$ in the following definition. 

\begin{definition}
Let $(\bm f , \bm h) \in L^2(0, T; V_L^*)$. By a weak solution to \eqref{eq:EquationInside}-\eqref{eq:Initial} we understand a function $\bu : (0, T) \times \Omega \rightarrow \mathbb{R}^2$, which satisfies
\begin{align*}
\bu & \in L^{\infty}(0, T; H_L) \cap L^2(0, T; V_L) , \\
\partial_t\bu & \in L^2(0, T; V_L^*) ,
\end{align*}
and for a.e. $t \in (0, T)$ and all $\bm \varphi \in V_L$ there holds the weak formulation
\begin{align}
\langle \partial_t \bu, \bm \varphi \rangle + \int\limits_{\Omega_L} \mathbb{S} (D\bu) : D \bm \varphi + \alpha \int\limits_{\Gamma} \bm s (\bu) \cdot \bm \varphi  + \int\limits_{\Omega_L}( \bu \cdot \nabla ) \bu \cdot \bm \varphi = \langle (\bm f, \bm h), \bm \varphi \rangle  \label{eq:WF}
\end{align}
together with energy equality in the form
\begin{align}
\frac{1}{2} \cdot \frac{{\rm d}}{{\rm d}t} || \bu ||_{H_L}^2  +  \int\limits_{\Omega_L} \mathbb{S} (D\bu) : D \bu  + \alpha \int\limits_{\Gamma} \bm s (\bu) \cdot \bu  = \langle (\bm f, \bm h), \bu \rangle . \label{eq:EnergyEquality}
\end{align}
\end{definition}

\subsection{Main results}
The main theorems of our paper are summarized here. First, we present the extension of regularity results from \cite{PrZe24}, where the same was done in the context of bounded domains.

\begin{theorem}\label{thm:ExistenceInfinitePlates}
Let $\bu_0 \in H_L$, $(\bm f, \bm h) \in L^2(0, T; V_L^*)$, and suppose that \eqref{eq:Coercivity}, \eqref{eq:Growth} and \eqref{eq:CoercivityBoundary}, \eqref{eq:GrowthBoundary} hold. Then there exists a unique weak solution to \eqref{eq:EquationInside}-\eqref{eq:Initial}. 

Moreover, if $(\partial_t \bm f, \partial_t \bm h) \in L^2(0, T; V_L^*)$, $(\bm f (0), \bm h (0)) \in H_L$, and both \eqref{eq:CoercivityDerivative}, \eqref{eq:CoercivityDerivativeBoundary} hold, then
\begin{align*}
\partial_t \bu & \in L_{\text{loc}}^{\infty}(0, T; H_L) \cap L_{\text{loc}}^2(0, T; V_L) , \\
\bu & \in L_{\text{loc}}^{\infty}(0, T; V_L). 
\end{align*}
Also, if $\bu_0 \in V_L \cap W^{2, 2}(\Omega_L)$, then the above regularity holds globally in time.
\end{theorem}
We note that for time-independent right-hand side $(\bm f, \bm h)$ the condition  $(\bm f (0), \bm h (0)) \in H_L$ can be dropped. Concerning the existence part of the previous theorem, instead of the conditions $U \in \mathcal{C}^2 ([0, +\infty))$ and $\bm s \in \mathcal{C}^1 (\mathbb{R}^2)$ it is enough to suppose just $\mathcal{C}^1 ([0, +\infty))$ and $\mathcal{C} (\mathbb{R}^2)$ respectively.

\begin{remark}
If we would consider $\Omega = \mathbb{R}^2 \times (0, L)$, i.e. 3D channel, then the existence part of the previous theorem also holds. The procedure is the same and the only difference is in the estimating time derivative, one gets $\partial_t \bu \in L^{\frac{4}{3}}(0, T; V_L^*)$ as in the standard setting.
\end{remark}

Using bootstrap argument we achieve the following $L^{\infty} (W^{2,p})$-regularity result.
\begin{theorem} \label{thm:MaximalRegularity}
Let $\mathbb{S}(\mathbb{D}) = 2\nu \mathbb{D}$, for some $\nu > 0$, and $\bm s \in \mathcal{C}^2(\mathbb{R}^2)$ with all its derivatives being bounded. Consider $\bu_0 \in H_L$ and $(\bm f, \bm h) \in H_L \cap ( L^p (\Omega_L) \times W^{1-\frac{1}{p}, p} (\Gamma)) $ being time-independent for certain $2 \leq p \leq 4$. Then the unique weak solution from \Cref{thm:ExistenceInfinitePlates} satisfies
\begin{align*}
\bu \in L_{\text{loc}}^{\infty} (0, T; W^{2, p} (\Omega_L))  .
\end{align*}
We also have $\nabla \pi \in L_{\text{loc}}^{\infty} (0, T; L^p (\Omega_L))$.
\end{theorem}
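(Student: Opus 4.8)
The plan is to freeze the time variable and read \eqref{eq:EquationInside}--\eqref{eq:EquationBoundary} as a stationary generalized Stokes system, to which one applies an $L^p$ elliptic regularity estimate, and then to bootstrap the integrability of the data. Since $\mathbb{S}(\mathbb{D}) = 2\nu\mathbb{D}$ and $\operatorname{div}\bu = 0$, we have $\operatorname{div}\mathbb{S}(D\bu) = \nu\Delta\bu$, so for a.e.\ fixed $t$ the pair $(\bu(t),\pi(t))$ solves
\begin{align*}
-\nu\Delta\bu + \nabla\pi = \bm F := \bm f - \partial_t\bu - (\bu\cdot\nabla)\bu \quad\text{in }\Omega_L, \qquad \operatorname{div}\bu = 0 ,
\end{align*}
together with $\bu\cdot\bm n = 0$ on $\Gamma\cup\Gamma_L$, $\bu = 0$ on $\Gamma_L$, and the tangential relation $[\alpha\bm s(\bu)+\mathbb{S}(D\bu)\bm n]_\tau = \beta(\bm h - \partial_t\bu)$ on $\Gamma$. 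I would invoke the stationary $W^{2,p}$ regularity theory for this slip-type boundary value problem from \cite{PrZe25} (the unbounded analogue of \cite{AACG21}), whose constants are uniform over the exhausting domains $Q_n$, in the schematic form
\begin{align*}
\|\bu(t)\|_{W^{2,p}(\Omega_L)} + \|\nabla\pi(t)\|_{L^p(\Omega_L)}
&\le C\big(\|\bm F(t)\|_{L^p(\Omega_L)} + \|\bm h\|_{W^{1-\frac1p,p}(\Gamma)} \\
&\qquad + \|\bm s(\bu(t))\|_{W^{1-\frac1p,p}(\Gamma)} + \|\partial_t\bu(t)\|_{W^{1-\frac1p,p}(\Gamma)}\big) .
\end{align*}
The whole difficulty is then reduced to showing that the right-hand side is bounded in $L^\infty_{\mathrm{loc}}(0,T)$ for the target $p\in[2,4]$.

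As a starting point \Cref{thm:ExistenceInfinitePlates} applies (the linear $\mathbb{S}$ satisfies \eqref{eq:CoercivityDerivative}, $\bm s\in\mathcal{C}^2$ with bounded derivatives gives \eqref{eq:CoercivityDerivativeBoundary}, and the time-independent forcing lies in $H_L$), so we already have $\bu\in L^\infty_{\mathrm{loc}}(0,T;V_L)$ and $\partial_t\bu\in L^\infty_{\mathrm{loc}}(0,T;H_L)\cap L^2_{\mathrm{loc}}(0,T;V_L)$. The crucial preliminary step is to upgrade the integrability of $\partial_t\bu$, since in the unbounded strip $L^\infty_{\mathrm{loc}}(H_L)=L^\infty_{\mathrm{loc}}(L^2)$ cannot be traded for a higher $L^p$. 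To this end I would differentiate the system in time: because the forcing $(\bm f,\bm h)$ is time-independent and $\bm s\in\mathcal{C}^2$ with bounded derivatives, the function $\bm w := \partial_t\bu$ solves a \emph{linear} problem of the type treated by \Cref{thm:ExistenceInfinitePlates} in the variable-coefficient form noted after it (the boundary nonlinearity linearizes to the bounded, coercive coefficient $\nabla\bm s(\bu)$). Restarting from any $t_0>0$, at which $\bm w(t_0)\in V_L$ by the $L^2_{\mathrm{loc}}(V_L)$ bound, the regularity part of \Cref{thm:ExistenceInfinitePlates} yields $\partial_t\bu\in L^\infty_{\mathrm{loc}}(0,T;V_L)$; via the 2D embedding $V_L\hookrightarrow W^{1,2}(\Omega_L)\hookrightarrow L^q(\Omega_L)$ this gives $\partial_t\bu\in L^\infty_{\mathrm{loc}}(0,T;L^q(\Omega_L))$ for every $q<\infty$, and likewise controls the trace $\partial_t\bu\restriction_\Gamma$.

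With the time derivative under control the spatial bootstrap proceeds in two rounds. First, to reach $L^\infty_{\mathrm{loc}}(0,T;W^{2,2})$ one applies the estimate with $p=2$: the only borderline term is the convective one, which I would treat by the two-dimensional Gagliardo--Nirenberg inequalities $\|\bu\|_{L^4}\le C\|\bu\|_{L^2}^{1/2}\|\nabla\bu\|_{L^2}^{1/2}$ and $\|\nabla\bu\|_{L^4}\le C\|\nabla\bu\|_{L^2}^{1/2}\|\bu\|_{W^{2,2}}^{1/2}$, so that $\|(\bu\cdot\nabla)\bu\|_{L^2}\le C\|\bu\|_{W^{1,2}}^{3/2}\|\bu\|_{W^{2,2}}^{1/2}$; Young's inequality absorbs the $W^{2,2}$ factor into the left-hand side, leaving a bound in terms of the already controlled $\|\partial_t\bu\|_{L^2}$ and $\|\bu\|_{V_L}$. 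Then $W^{2,2}(\Omega_L)\hookrightarrow L^\infty$ and $\nabla\bu\in L^\infty_{\mathrm{loc}}(L^q)$ for all $q<\infty$, whence $(\bu\cdot\nabla)\bu\in L^\infty_{\mathrm{loc}}(0,T;L^q)$ for every $q<\infty$. In the second round, since now $\bm f\in L^p$, $\partial_t\bu\in L^\infty_{\mathrm{loc}}(L^q)$ and $(\bu\cdot\nabla)\bu\in L^\infty_{\mathrm{loc}}(L^q)$, the forcing $\bm F$ belongs to $L^\infty_{\mathrm{loc}}(0,T;L^p)$ for $p\le 4$, the boundary datum $\bm s(\bu)$ lies in $W^{1-\frac1p,p}(\Gamma)$ by a composition estimate using $\bm s\in\mathcal{C}^2$ with bounded derivatives together with the trace of $\bu\in W^{2,p}$, and a final application of the $L^p$ estimate gives $\bu\in L^\infty_{\mathrm{loc}}(0,T;W^{2,p})$ and $\nabla\pi\in L^\infty_{\mathrm{loc}}(0,T;L^p)$.

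The main obstacle is the integrability upgrade of $\partial_t\bu$ in the unbounded domain: unlike in a bounded domain one cannot pass from $L^2$ to $L^p$ for free, and the elliptic estimate genuinely requires the forcing in $L^p$ (an $L^2+L^p$ splitting fails because $W^{2,2}(\mathbb{R}^2)\not\hookrightarrow W^{2,p}$). This is precisely why the stronger hypothesis $\bm s\in\mathcal{C}^2$ and the time-independence of the forcing are needed --- they make the time-differentiated system amenable to \Cref{thm:ExistenceInfinitePlates}. A secondary point requiring care is that every constant (in the elliptic estimate, the Gagliardo--Nirenberg inequalities, and the trace and composition estimates) must be uniform in the exhaustion $Q_n$, so that the bounds obtained first on $Q_n$ survive the limit $n\to\infty$ built into Section~3; the range $p\le 4$ reflects the data assumptions and the boundary regularity theory imported from \cite{PrZe25}.
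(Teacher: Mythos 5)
Your overall strategy coincides with the paper's: freeze time, view the equation as a stationary Stokes-type problem with slip boundary condition, invoke the $L^p$ elliptic theory of \cite{PrZe25} (Proposition~\ref{thm:StationaryRegularity}), and control the data by time-differentiating the system. For $p=2$ your argument is essentially complete and equivalent to the paper's. However, for $2<p\leq 4$ there is a genuine gap in how you control the boundary datum. The elliptic estimate \eqref{eq:StationaryRegularitySecond} requires the full boundary forcing --- which contains $\beta\,\mathrm{tr}\,\partial_t\bu$ --- to lie in $W^{1-\frac1p,p}(\Gamma)$. From your integrability upgrade you only obtain $\partial_t\bu\in L^\infty_{\mathrm{loc}}(0,T;V_L)\hookrightarrow L^\infty_{\mathrm{loc}}(0,T;W^{1,2}(\Omega_L))$, whose trace lies in $W^{\frac12,2}(\Gamma)$. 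Since $\Gamma\cong\mathbb{R}$ is one-dimensional, $W^{\frac12,2}(\Gamma)$ does \emph{not} embed into $W^{1-\frac1p,p}(\Gamma)$ for $p>2$ (the target has fractional smoothness $1-\frac1p>\frac12$ measured in $L^p$), and the interior bounds $\partial_t\bu\in L^\infty_{\mathrm{loc}}(L^q)$ for all $q<\infty$ give no additional smoothness of the trace. Your phrase ``and likewise controls the trace'' therefore does not deliver what the second round of the bootstrap needs.

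The paper closes exactly this gap by one more application of \emph{elliptic} regularity to the time-differentiated system \eqref{eq:SystemDerivative}: having shown (as you do) that $\bv=\partial_t\bu$ enjoys $\partial_t\bv\in L^\infty_{\mathrm{loc}}(H)\cap L^2_{\mathrm{loc}}(V)$ by differentiating the Galerkin scheme twice in time, it feeds $(\partial_t\bv,\mathrm{tr}\,\partial_t\bv)\in L^\infty_{\mathrm{loc}}(L^{4/3}(\Omega)\times W^{-\frac14,4}(\Gamma))$ into the weak estimate \eqref{eq:StationaryRegularityFirst} for the $\bv$-system to conclude $\partial_t\bu\in L^\infty_{\mathrm{loc}}(0,T;W^{1,4}(\Omega))$; interpolating with the $W^{1,2}$ bound then gives $\mathrm{tr}\,\partial_t\bu\in L^\infty_{\mathrm{loc}}(0,T;W^{1-\frac1p,p}(\Gamma))$ for all $2\leq p\leq4$, after which \eqref{eq:StationaryRegularitySecond} applies. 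You should add this step; without it your proof only establishes the case $p=2$. A second, more cosmetic, divergence is that you handle the convective term at level $p=2$ by Gagliardo--Nirenberg and absorption, whereas the paper first upgrades $\bu$ to $L^\infty_{\mathrm{loc}}(W^{1,4})$ via the weak-data estimate \eqref{eq:StationaryRegularityFirst}, which makes $\bu$ bounded and the convective term directly an $L^4$ function; both routes work, but the paper's has the advantage of producing the $W^{1,4}$ information that is reused for the time derivative.
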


\begin{remark}
The previous theorem can be proven, just like in \cite{PrZe24}, also for time-dependent right-hand sides $\bm f$, $\bm h$, but one needs a rather long set of assumptions about their time derivatives.

We can also extend both results for more general boundary non-linearities. After an appropriate modification of the space $V_L$ we can, instead of \eqref{eq:CoercivityBoundary} and \eqref{eq:GrowthBoundary}, assume that
\begin{align*}
( \bm s (\bu) - \bm s (\bv) ) \cdot (\bu - \bv) 
& \geq c_1 (|\bu|^{r-2} + |\bv|^{r-2}) |\bu - \bv|^2 , \\
| \bm s (\bu) | 
& \leq c_2 |\bu|^{r-1}  
\end{align*}
hold for certain $r \geq 2$. We do not need to assume $( \bm s (\bu) - \bm s (\bv) ) \cdot (\bu - \bv) \geq c_1 (1+|\bu|^{r-2} + |\bv|^{r-2}) |\bu - \bv|^2$, as control of $L^2$-norm of $D\bu$ is enough to know that $\bu \in H_L$, see section 2. Hence, we would find that the solution $\bu$ has trace in $L^2(\Gamma) \cap L^r(\Gamma)$ for a.e. $t \in (0, T)$.

\end{remark}

The last theorem concerns the existence of the attractor $\mathcal{A}$ and its dimension; to prove it we use regularity of $\bu$ from \Cref{thm:ExistenceInfinitePlates} and a standard method of Lyapunov exponents, see e.g. \cite{Rosa}.
\begin{theorem}\label{thm:DimensionEstimate}
Let $\mathbb{S} (\mathbb{D}) = 2\nu \mathbb{D}$, $\bm s (\bu) = 2\nu \bu$ for some constant $\nu > 0$, and $(\bm f, \bm h) \in H_L$ be time-independent. Then the system \eqref{eq:EquationInside}-\eqref{eq:Initial} possess the global attractor and its fractal dimension satisfies
\begin{align}
\dim_{H_L}^f \mathcal{A}
\leq \frac{8 \kappa}{\nu^4} \left[ \frac{32L^2}{\pi^2} + \beta \min \Big\{ \frac{1}{\alpha}, 8L   \Big\}  \right]^2 || (\bm f, \bm h)||_{H_L}^2 , \label{eq:DimensionEstimate}
\end{align}
where $\kappa$ is the universal constant from \Cref{thm:Lieb-Thirring}.
\end{theorem}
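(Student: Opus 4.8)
The plan is to follow the standard Lyapunov-exponent machinery for estimating the fractal dimension of a global attractor, adapted to the Gelfand triplet $V_L \hookrightarrow H_L \hookrightarrow V_L^*$ specific to the dynamic slip condition. First I would establish existence of the global attractor: using the regularity from \Cref{thm:ExistenceInfinitePlates} together with the energy equality \eqref{eq:EnergyEquality} I would show that the semigroup $S(t)$ associated to the time-independent data $(\bm f, \bm h) \in H_L$ is well defined on $H_L$, possesses an absorbing set (from the dissipation estimate obtained by testing with $\bu$ and using coercivity \eqref{eq:Coercivity}, \eqref{eq:CoercivityBoundary}), and is asymptotically compact. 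Since $\Omega_L$ is unbounded the embedding $V_L \hookrightarrow H_L$ is not compact, so asymptotic compactness must be recovered by the energy-equation (tail-estimate) method of \cite{Rosa} rather than by compact Sobolev embeddings; this yields the attractor $\mathcal{A}$.

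Next I would linearize the equation along a trajectory $\bu(t)$ on $\mathcal{A}$. For $\mathbb{S}(\mathbb{D}) = 2\nu\mathbb{D}$ and $\bm s(\bu) = 2\nu\bu$ the first variation equation is linear: $\partial_t \bm U - 2\nu\,\mathrm{div}\,D\bm U + (\bu\cdot\nabla)\bm U + (\bm U\cdot\nabla)\bu + \nabla\pi = 0$ inside, with $\beta\partial_t\bm U + [2\nu\bm U + 2\nu D\bm U\,\bm n]_\tau = 0$ on $\Gamma$. I would then consider the evolution of an infinitesimal $m$-dimensional volume and bound the trace of the linearized operator restricted to $m$-dimensional subspaces. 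Writing $q_m = \limsup_{t\to\infty} \sup \frac{1}{t}\int_0^t \mathrm{Tr}(\mathcal{L}(\tau)\circ P_m)\,d\tau$, the fractal dimension is bounded by the smallest $m$ for which $q_m < 0$. For an orthonormal family $\{\bm\varphi_j\}_{j=1}^m$ in $H_L$ the trace produces a dissipative term $-2\nu\sum_j \|\bm\varphi_j\|_{V_L}^2$ and an advective term which, after the usual cancellation of $(\bu\cdot\nabla)$ via incompressibility, reduces to controlling $\sum_j \int_{\Omega_L}(\bm\varphi_j\cdot\nabla)\bu\cdot\bm\varphi_j$ by the density $\rho(x) = \sum_j |\bm\varphi_j(x)|^2$.

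The crucial ingredient, and the main obstacle, is the collective estimate for this density, which is exactly where \Cref{thm:Lieb-Thirring} enters: a Lieb--Thirring inequality of the form $\int_{\Omega_L}\rho^2 \le \kappa \sum_j \|\bm\varphi_j\|_{V_L}^2$ valid for $H_L$-orthonormal, $V_L$-bounded families. The delicate point in the present setting is that orthonormality and boundedness are measured in the twisted norms carrying the factors $\alpha$ and $\beta$ and the boundary $L^2(\Gamma)$ contributions, so I must verify that the Lieb--Thirring constant $\kappa$ can be taken independent of $\alpha$; this is precisely why section 2 produces Korn-type inequalities with explicit constants uniform in $\alpha$. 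Combining the advective bound $|\sum_j\int(\bm\varphi_j\cdot\nabla)\bu\cdot\bm\varphi_j| \le \|\nabla\bu\|_{L^2}\,\|\rho\|_{L^2}$ with Young's inequality and the Lieb--Thirring bound absorbs half of the dissipation, leaving $q_m \le -\nu\sum_j\|\bm\varphi_j\|_{V_L}^2 + \frac{\kappa}{4\nu}\|\nabla\bu\|_{L^2}^2$.

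Finally I would close the estimate quantitatively. A lower bound $\sum_{j=1}^m \|\bm\varphi_j\|_{V_L}^2 \ge c\, m^2$ (a spectral/Poincaré-type bound for the family, again with the explicit constant from the Korn inequality of section 2, giving the factor $32L^2/\pi^2$ from the width $L$ together with the boundary contribution $\beta\min\{1/\alpha, 8L\}$) combines with a uniform-in-time bound on $\|\nabla\bu\|_{L^2}^2$ over $\mathcal{A}$ — obtained from the absorbing-set/energy estimate controlling $\|(\bm f,\bm h)\|_{H_L}^2$ — to force $q_m < 0$ once $m$ exceeds the right-hand side of \eqref{eq:DimensionEstimate}. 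Tracking all constants through this chain, in particular keeping the explicit $\nu$-powers and the $\alpha,\beta$-dependence through the Korn and Lieb--Thirring constants, yields the stated bound; the bookkeeping of these constants, rather than any single hard inequality, is the part that demands the most care.
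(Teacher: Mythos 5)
Your overall architecture (absorbing set via the energy method of \cite{Rosa}, linearization, trace estimate, Lieb--Thirring) matches the paper, but two steps as written would fail or are left unresolved.

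First, the quantitative lower bound you invoke in the last paragraph, $\sum_{j=1}^m \|\bm\varphi_j\|_{V_L}^2 \ge c\,m^2$, is false in the channel $\mathbb{R}\times(0,L)$: taking $m$ far-separated horizontal translates of a single fixed profile produces an $H_L$-orthonormal family whose $V_L$-energies sum to only $c\,m$ (this is the usual loss in unbounded domains, where the relevant spectrum is not discrete). A quadratic bound would moreover give $\dim\mathcal{A}\lesssim\|(\bm f,\bm h)\|_{H_L}$, not the $\|(\bm f,\bm h)\|_{H_L}^2$ of \eqref{eq:DimensionEstimate}, so it is inconsistent with the statement you are proving. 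The correct (and only available) bound is linear and comes from the norm equivalence \eqref{eq:EquivalenceNorms}: $\sum_j\|\bm\varphi_j\|_{V_L}^2 \ge \Lambda^{-1}\sum_j\|\bm\varphi_j\|_{H_L}^2$, which is exactly where the factor $\Lambda = \frac{32L^2}{\pi^2}+\beta\min\{1/\alpha,\,8L\}$ enters; one then concludes with the concavity criterion of \Cref{thm:Dimension} applied to a function \emph{linear} in $N$.

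Second, you correctly flag that the Lieb--Thirring inequality must be applied to a family orthonormal in the twisted $H_L$ inner product, but you do not resolve this, and the Korn constants of Section 2 are not what does the job. The paper's device is to renormalize to $(\bm\varphi_i,\bm\varphi_j)_{H}=\tfrac12\delta_{ij}$, extend each $\bm\varphi_j$ by reflection to the doubled strip $\mathbb{R}\times(-1,1)$ as in \Cref{thm:BadKorn}, and check that the extended family is \emph{suborthonormal} in plain $L^2$ there (the $\beta$-weighted boundary term enters with a favorable sign); then \Cref{thm:Lieb-Thirring} (the version of \cite{GMT88} for suborthonormal families in $W_0^{1,2}$ of an arbitrary open planar set) applies with a universal $\kappa$ independent of $\alpha$ and $\beta$. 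Without some such argument the collective estimate on $\rho=\sum_j|\bm\varphi_j|^2$ is unjustified. Two smaller omissions: to run the trace machinery in this non-compact setting you also need to prove that the linearized flow is a uniform quasidifferential of $\mathcal{S}(t)$ on $\mathcal{A}$ (the paper's \Cref{thm:Differentiability}), and the control of $\|\nabla\bu\|_{L^2}^2$ must be a time average obtained by integrating the energy equality, namely $\frac{1}{t}\int_0^t\|\bu\|_V^2\le\frac{\Lambda}{4}\|(\bm f,\bm h)\|_H^2+o(1)$, rather than a pointwise-in-time bound, if the explicit constant in \eqref{eq:DimensionEstimate} is to be tracked.
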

This result nicely corresponds to the best-known dimension estimate for the situation when the Dirichlet boundary condition is considered also on $y = 0$ (i.e. $\alpha \to +\infty$), see e.g. \cite{IPZ2016}, in this case there holds
\begin{align*}
\dim_{L^2(\Omega_L)}^f \mathcal{A}
\leq \frac{1}{4\sqrt{3} \nu^4} \cdot \frac{L^4}{\pi^4} || \bm f||_{L^2(\Omega_L)}^2 .
\end{align*}
The constant in \eqref{eq:DimensionEstimate} behaves well also for $\alpha$ or $\beta \to 0_+$; in the latter, the estimate is the same as for $\alpha \to +\infty$. On the other hand, it explodes when $\beta \to +\infty$.

\section{Function spaces}
In the analysis of our system \eqref{eq:EquationInside}-\eqref{eq:Initial} it is enough to consider only the case of $L$ equal to 1. The general situation is then deduced from it.
\begin{lemma}\label{thm:NonDimensionalization}
Using proper scaling we can restrict ourselves to $L = 1$. Moreover, if $\nu > 0$ and $\mathbb{S} (\mathbb{D}) = 2\nu \mathbb{D}$, $\bm s (\bu) = 2\nu \bu$, then it can be assumed that $\nu = 1$.
\end{lemma}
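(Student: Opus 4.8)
The plan is to prove both reductions by one and the same explicit change of variables (a non-dimensionalisation), and to read off how the data and the parameters $\alpha,\beta$ must transform so that the whole structure --- the weak formulation \eqref{eq:WF}, the energy equality \eqref{eq:EnergyEquality}, and the norms defining $V_L$ and $H_L$ --- is preserved. Concretely, I would set $\tilde x = x/L$, $\tilde y = y/L$, which maps $\Omega_L$ onto $\Omega_1$ and $\Gamma$ onto $\mathbb{R}\times\{0\}$, couple it with the time rescaling $\tilde t = Ut/L$ and the velocity rescaling $\tilde{\bu}(\tilde t,\tilde x,\tilde y) = \tfrac1U\,\bu(t,x,y)$ for a free scale $U>0$, and put $\tilde\pi = \tfrac1{U^2}\pi$. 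Since the scaling is isotropic, $\bm n$ is unchanged and the algebraic constraints $\operatorname{div}\bu=0$, $\bu\cdot\bm n=0$ and the Dirichlet condition on $\Gamma_L$ are automatically inherited, so only the two dynamical equations and the parameters require attention.

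First I would substitute this into \eqref{eq:WF}. Because $\nabla = \tfrac1L\tilde\nabla$, each spatial derivative contributes a factor $1/L$, while the Jacobians give $\mathrm dx\,\mathrm dy = L^2\,\mathrm d\tilde x\,\mathrm d\tilde y$ on $\Omega_L$ and $\mathrm ds = L\,\mathrm d\tilde s$ on $\Gamma$. Requiring that the inertial term $(\bu\cdot\nabla)\bu$ and the time derivative keep coefficient one fixes $\tilde t = Ut/L$, and after dividing the resulting identity by the common factor $U^2L$ the interior terms reproduce \eqref{eq:WF} on $\Omega_1$ with $\tilde{\mathbb S}(\tilde{\mathbb D}) = \tfrac1{U^2}\mathbb S\!\big(\tfrac UL\tilde{\mathbb D}\big)$ and $\tilde{\bm f} = \tfrac{L}{U^2}\bm f$, whereas the two boundary integrals reproduce the slip terms with $\tilde{\bm s}(\tilde{\bm v}) = \tfrac1{U^2L}\bm s(U\tilde{\bm v})$ and $\tilde{\bm h} = \tfrac{L}{U^2}\bm h$, provided we set $\tilde\alpha = \alpha L$ and $\tilde\beta = \beta/L$. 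The decisive consistency check is that these very choices also convert the defining norms: a short computation gives $\|D\bu\|_{L^2(\Omega_L)}^2 = U^2\|\tilde D\tilde{\bu}\|_{L^2(\Omega_1)}^2$ and $\alpha\|\bu\|_{L^2(\Gamma)}^2 = U^2(\alpha L)\|\tilde{\bu}\|_{L^2(\tilde\Gamma)}^2$, so that $\|\cdot\|_{V_L}^2 = U^2\|\cdot\|_{V_1}^2$ with boundary weight $\tilde\alpha=\alpha L$; likewise $\|\cdot\|_{H_L}^2 = U^2L^2\|\cdot\|_{H_1}^2$ with weight $\tilde\beta = \beta/L$. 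Hence $\bu\mapsto\tilde{\bu}$ is an isomorphism of the spaces $L^\infty(0,\cdot\,;H_L)\cap L^2(0,\cdot\,;V_L)$ (and of the duals $V_L^*$) onto their $\Omega_1$-counterparts over the rescaled time interval, carrying weak solutions to weak solutions and the energy equality \eqref{eq:EnergyEquality} to its rescaled form; it remains to note that $\tilde{\mathbb S}$ and $\tilde{\bm s}$ still obey \eqref{eq:Coercivity}--\eqref{eq:CoercivityDerivativeBoundary}, which follows by direct substitution with the rescaled constants $c_1/(UL)$, $c_2/(UL)$, $c_3/(UL)$. Taking $U=1$ gives the simplest reduction to $L=1$.

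For the second claim I would start already on $\Omega_1$ with $\mathbb S(\mathbb D)=2\nu\mathbb D$ and $\bm s(\bu)=2\nu\bu$, and apply the same transformation with $L=1$. Then $\tilde{\mathbb S}(\tilde{\mathbb D}) = \tfrac{2\nu}{U}\tilde{\mathbb D}$ and $\tilde{\bm s}(\tilde{\bm v}) = \tfrac{2\nu}{U}\tilde{\bm v}$, so the choice $U=\nu$ yields $\tilde\nu=1$; since $L=1$ this leaves $\tilde\alpha=\alpha$ and $\tilde\beta=\beta$ untouched and merely rescales the data as $\tilde{\bm f}=\bm f/\nu^2$, $\tilde{\bm h}=\bm h/\nu^2$ and time as $\tilde t=\nu t$. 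The routine part is the term-by-term substitution; the genuine difficulty --- and where I would be most careful --- is the simultaneous bookkeeping of $\alpha$ and $\beta$, which must be made to agree across three different appearances: as coefficients in the boundary equation \eqref{eq:EquationBoundary}, inside the duality pairing of the Gelfand triple on the right-hand side of \eqref{eq:WF}, and as the hard-coded weights in the very definitions of $V_L$ and $H_L$. The anisotropy between these weights (the factor $L$ multiplying $\alpha$ but dividing $\beta$) is exactly what makes the reduction nontrivial, and any inconsistent split of the rescaling between $\tilde\alpha$ and $\tilde{\bm s}$, or between $\tilde\beta$ and the pivot space, would break the equivalence.
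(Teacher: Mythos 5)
Your proposal is correct and follows essentially the same route as the paper: the identical isotropic rescaling $(x,y)\mapsto(x,y)/L$, $t\mapsto at/L$, $\bu\mapsto\bu/a$, $\pi\mapsto\pi/a^2$, leading to the same transformed parameters $\alpha^*=\alpha L$, $\beta^*=\beta/L$ and data $\bm f^*,\bm h^*$; the only cosmetic difference is that you perform the reduction in two stages ($U=1$ for the length scale, then $U=\nu$) where the paper fixes $a=\nu/L$ at once, and you additionally spell out the norm identities and the rescaled structural constants, which the paper only records for $\|(\bm f^*,\bm h^*)\|_H$.
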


\begin{proof}
Let us focus on the special case $\mathbb{S} (\mathbb{D}) = 2\nu \mathbb{D}$, $\bm s (\bu) = 2\nu \bu$. We introduce non-dimensional variables $t^* = \frac{t}{\tau}$ and $(x^*, y^*) = \frac{1}{L} (x, y)$, and differential operators with respect to these variables, e.g. $\nabla^* = L \nabla$. Setting $\bu^*(t^*, x^*, y^*) = \frac{1}{a}\bu(t, x, y) $ and $\pi^*(t^*, x^*, y^*) = \frac{1}{a^2} \pi(t, x, y)$, we obtain from \eqref{eq:EquationInside}-\eqref{eq:EquationBoundary} that
\begin{align*}	
\frac{a}{\tau} \partial_{t^*} \bu^* - \frac{a \nu}{L^2} \Delta^* \bu^* + \frac{a^2}{L} (\bu^* \cdot \nabla^*)\bu^* + \frac{a^2}{L} \nabla^*\pi^* &= \bm f (\tau t^*, Lx^*, Ly^*) \text{ in } \big(0, \frac{T}{\tau} \big) \times \mathbb{R} \times (0, 1) , \\			
\frac{a \beta}{\tau L} \partial_{t^*} \bu^* + \frac{2a \nu \alpha}{L} \bu^* + \frac{a \nu }{L^2} [(2D^*\bu^*)\bm n]_{\tau} &= \frac{\beta}{L} \bm h (\tau t^*, Lx^*) \text{ on } \big(0, \frac{T}{\tau} \big) \times \mathbb{R} \times \{ 0 \} .
\end{align*}
For general $\mathbb{S}$ and $\bm s$ we would introduce rescaled functions $\mathbb{S}^* (D^* \bu^*) = \frac{L}{a} \mathbb{S} (D \bu)$ and $\bm s^* (\bu^*) = \frac{1}{a}\bm s (\bu)$. We require 
\begin{align*}	
\frac{a}{\tau} = \frac{a \nu }{L^2} = \frac{a^2}{L}  
\end{align*}
to obtain $\tau = \frac{L^2}{\nu}$ and $a=\frac{\nu}{L}$ for given $L$ and $\nu>0$. Multiplying both equations by  $\frac{\tau}{a} = \frac{L^3}{\nu^2}$, we find
\begin{align*}   
\partial_{t^*} \bu^* - \Delta^*  \bu^*  + (\bu^*  \cdot \nabla^* )\bu^*  + \nabla^* \pi^*  &= \bm f^*  \text{ in } (0, T^*) \times \Omega_1 , \\          
\beta^*  \partial_{t^*} \bu^* + 2\alpha^*  \bu^* + [(2D^* \bu^* )\bm n]_{\tau} &= \beta^*  \bm h^*    \text{ on } (0, T^*) \times \Gamma ,
\end{align*}
where
\begin{align*} 
\alpha^*  = \alpha L , \quad
\beta^*  = \frac{\beta}{L}\, , \quad
T^* = \frac{\nu T}{L^2}\, , \quad
\bm f^*  = \frac{L^3}{\nu^2} \bm f(\tau t^*, Lx^*, Ly^*) , \quad
\bm h^*  = \frac{L^3}{ \nu^2} \bm h(\tau t^*, Lx^*)  . 
\end{align*}
We note that under this scaling we have the following relation 
\begin{align*}
\int\limits_{\mathbb{R}} \int\limits_0^1 | \bm f^* ( t^*, x^*, y^*)|^2 \, {\rm d}x^* \, {\rm d}y^*  
+ \beta^*\int\limits_{\Gamma}  | \bm h^* ( t^*, x^*)|^2 \, {\rm d}x^*
&= \frac{L^4}{\nu^4} \Big( \int\limits_{\mathbb{R}} \int\limits_0^L |\bm f (t, x, y)|^2 \, {\rm d}x \, {\rm d}y  +\beta \int\limits_{\Gamma} |\bm h (t, x)|^2 \, {\rm d}x \Big) ,
\end{align*} 
and thus, $|| (\bm f^*, \bm h^*) ||_H^2 = \frac{L^4}{\nu^4} || (\bm f, \bm h) ||_{H_L}^2 $.

\end{proof}

Next, we continue with several technical results regarding the spaces $V$ and $H$ (i.e. for $L = 1$). The following lemma and its corollary show that if $\bu \in V$, then we control its usual Sobolev norm, and therefore, also the trace of $\bu$; these are essentially versions of Korn's inequality. These results hold for bounded $\Omega$, see Lemma 1.11 in \cite{BMR2007}, nevertherless its proof does not work in our framework.

\begin{lemma}\label{thm:TechnicalEstimates}
Let $\bu \in \mathcal{V}(\Omega)$ and denote 
\begin{align*}
\widetilde{\bu} (x, y) = 
\begin{cases} 
\bu (x, y) & \text{ if } 0 \leq y \leq 1 \\ 
\bu (x, - y )  & \text{ if } -1 \leq y \leq 0 
\end{cases} .
\end{align*}
Then $\widetilde{\bu} (x, 1) = 0 = \widetilde{\bu} (x, -1)$, $\widetilde{\bu}$ has weak derivatives in $\mathbb{R} \times (-1, 1)$ and there holds
\begin{align}
|| D \widetilde{\bu} ||_{L^2(\mathbb{R}^2)}^2 &\leq 4|| D \bu ||_{L^2(\Omega)}^2 
\text{ and } 
|| \widetilde{\bu} ||_{L^2(\mathbb{R}^2)}^2 \leq 4|| \bu ||_{L^2(\Omega)}^2 . \label{eq:est1}
\end{align}
Moreover, we have the following relations
\begin{align}
|| \nabla \bu ||_{L^2(\Omega)}^2 &\leq 8|| D \bu ||_{L^2(\Omega)}^2 , \label{eq:FirstKornApproximation} \\
|| \bu ||_{L^2(\Omega)}^2  &\leq 8 || D \bu ||_{L^2(\Omega)}^2
\text{ and } 
|| \bu ||_{L^2(\Gamma)}^2  \leq 8 || D \bu ||_{L^2(\Omega)}^2 , \label{eq:SecondKornApproximation} \\ 
|| \widetilde{\bu} ||_{W^{1,2}(\mathbb{R}^2)}^2 &\leq 64|| D \bu ||_{L^2(\Omega)}^2 . \label{eq:est2} 
\end{align}
\end{lemma}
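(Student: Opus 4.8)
The plan is to carry out everything through the even reflection $\widetilde{\bu}$, reducing the estimates to one integration-by-parts identity supplied by the divergence-free constraint. First I would record the elementary properties of $\widetilde{\bu}$. Writing $\bu=(u_1,u_2)$, the two pieces agree at $y=0$ (both equal $\bu(x,0)$), so $\widetilde{\bu}$ is continuous across the interface; hence no singular part appears and the weak derivatives on $\mathbb{R}\times(-1,1)$ are the reflections of the classical ones, namely $\partial_x\widetilde{u_i}(x,y)=\partial_x u_i(x,-y)$ and $\partial_y\widetilde{u_i}(x,y)=-\partial_y u_i(x,-y)$ for $y<0$. Since $\mathrm{supp}\,\bu\subset\Omega\cup\Gamma$ is compact, $\bu$ and all its derivatives vanish near $\Gamma_1$, so $\widetilde{\bu}(x,\pm1)=0$ and the zero-extension lies in $W^{1,2}(\mathbb{R}^2)$ with the relevant norms over $\mathbb{R}^2$ equal to those over $\mathbb{R}\times(-1,1)$. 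Reflection symmetry then gives the two ``free'' identities $\|\widetilde{\bu}\|_{L^2(\mathbb{R}^2)}^2=2\|\bu\|_{L^2(\Omega)}^2$ and $\|\nabla\widetilde{\bu}\|_{L^2(\mathbb{R}^2)}^2=2\|\nabla\bu\|_{L^2(\Omega)}^2$; the first already yields the $L^2$-part of \eqref{eq:est1}.

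The crux is the symmetric-gradient bound in \eqref{eq:est1}. The subtlety is that reflection flips the sign of the off-diagonal entry: on the lower strip $(D\widetilde{\bu})_{12}(x,y)=\tfrac12(\partial_x u_2-\partial_y u_1)(x,-y)$, whereas $(D\bu)_{12}=\tfrac12(\partial_x u_2+\partial_y u_1)$, so there is no pointwise bound of $|D\widetilde{\bu}|$ by $|D\bu|$ and the estimate must be integral. Summing the two strips I would obtain
\[
\|D\widetilde{\bu}\|_{L^2(\mathbb{R}^2)}^2=\int_{\Omega}\Big[2(\partial_x u_1)^2+2(\partial_y u_2)^2+(\partial_x u_2)^2+(\partial_y u_1)^2\Big].
\]
Comparing this with $\|D\bu\|_{L^2(\Omega)}^2=\int_\Omega[(\partial_x u_1)^2+(\partial_y u_2)^2+\tfrac12(\partial_x u_2+\partial_y u_1)^2]$, the only genuine work is the cross term $\int_\Omega\partial_x u_2\,\partial_y u_1$. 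Using the identity $\partial_x u_2\,\partial_y u_1-\partial_y u_2\,\partial_x u_1=\partial_x(u_2\partial_y u_1)-\partial_y(u_2\partial_x u_1)$ and the divergence theorem, the boundary contributions vanish because $u_2=0$ on $\Gamma$ (so also $\partial_x u_2=0$ there), $\bu=0$ on $\Gamma_1$, and the support is compact in $x$; thus $\int_\Omega\partial_x u_2\,\partial_y u_1=\int_\Omega\partial_x u_1\,\partial_y u_2$, and $\mathrm{div}\,\bu=0$ (i.e.\ $\partial_x u_1=-\partial_y u_2$) turns this into $-\int_\Omega(\partial_y u_2)^2$. Feeding this back gives $\|D\widetilde{\bu}\|^2=4\|D\bu\|^2-\|\partial_x u_2\|_{L^2(\Omega)}^2-\|\partial_y u_1\|_{L^2(\Omega)}^2\le4\|D\bu\|^2$, which is \eqref{eq:est1}. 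I expect this integration by parts---taming the antisymmetric part of $\nabla\bu$ through $\mathrm{div}\,\bu=0$ and the mixed boundary conditions---to be the main obstacle; the rest is bookkeeping.

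The remaining inequalities then follow cheaply. The same computation yields the Korn identity $\|\nabla\bu\|_{L^2(\Omega)}^2=2\|D\bu\|_{L^2(\Omega)}^2$ (alternatively one applies the classical whole-space Korn inequality to $\widetilde{\bu}$ and restricts, using $\|\nabla\bu\|_{L^2(\Omega)}\le\|\nabla\widetilde{\bu}\|_{L^2(\mathbb{R}^2)}$), which is stronger than \eqref{eq:FirstKornApproximation}. For \eqref{eq:SecondKornApproximation} I would exploit $\bu(x,1)=0$: writing $u_i(x,y)=-\int_y^1\partial_s u_i(x,s)\,ds$ and applying Cauchy--Schwarz on the unit interval gives both $\|\bu\|_{L^2(\Omega)}^2\le\|\partial_y\bu\|_{L^2(\Omega)}^2$ and the trace bound $\|\bu\|_{L^2(\Gamma)}^2\le\|\partial_y\bu\|_{L^2(\Omega)}^2$, whence \eqref{eq:FirstKornApproximation} supplies the factor $8$. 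Finally \eqref{eq:est2} is just $\|\widetilde{\bu}\|_{W^{1,2}(\mathbb{R}^2)}^2=2\|\bu\|_{L^2(\Omega)}^2+2\|\nabla\bu\|_{L^2(\Omega)}^2$ combined with \eqref{eq:FirstKornApproximation} and \eqref{eq:SecondKornApproximation}. All constants are deliberately non-optimal (the sharp Korn constant here is $2$, not $8$), so the stated bounds hold with room to spare.
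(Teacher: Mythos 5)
Your proof is correct, and it reaches the same endpoint as the paper's by the same overall architecture (even reflection in $y$, a Korn-type step relating $\nabla\bu$ to $D\bu$, then the fundamental theorem of calculus in the bounded direction for \eqref{eq:SecondKornApproximation} and \eqref{eq:est2}), but the Korn-type step is handled genuinely differently. The paper mollifies the compactly supported reflection $\widetilde{\bu}$ and applies the whole-space identity $\int|\nabla v|^2\le 2\int|Dv|^2$ for $v\in\mathcal{C}^\infty_c$, then restricts to $\Omega$ by weak lower semicontinuity; you instead perform the integration by parts directly on $\Omega$, using $\mathrm{div}\,\bu=0$ together with $u_2=0$ on $\Gamma$ and $\bu=0$ on $\Gamma_1$ to evaluate the cross term $\int_\Omega\partial_x u_2\,\partial_y u_1=-\int_\Omega(\partial_y u_2)^2$. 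This buys you two things: the exact identity $\|\nabla\bu\|_{L^2(\Omega)}^2=2\|D\bu\|_{L^2(\Omega)}^2$ (sharper than the factor $8$ in \eqref{eq:FirstKornApproximation}), and — more importantly — an actual proof of the bound $\|D\widetilde{\bu}\|_{L^2}^2\le 4\|D\bu\|_{L^2(\Omega)}^2$ in \eqref{eq:est1}, which the paper asserts without detail and which cannot follow from any pointwise comparison, since the reflection flips the sign of the off-diagonal entry and controlling $\partial_x u_2-\partial_y u_1$ by $D\bu$ is precisely a Korn-type statement requiring the boundary conditions. Your identification of that cross term as the crux is accurate, and your identity $\|D\widetilde{\bu}\|^2=4\|D\bu\|^2-\|\partial_x u_2\|^2_{L^2(\Omega)}-\|\partial_y u_1\|^2_{L^2(\Omega)}$ checks out. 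The only place you are terser than the paper is the justification that the piecewise classical derivatives of the continuous reflection are its weak derivatives (the paper runs a cutoff argument near $y=0$); this is standard and not a gap.
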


\begin{proof}
Let us fix $\bu \in \mathcal{V}(\Omega)$. Relations $\widetilde{\bu} (x, -1) = 0 =  \widetilde{\bu} (x, 1)$ follow from the very definition of $\mathcal{V}(\Omega)$ and we thus can trivaly extend $\widetilde{\bu}$ into whole $\mathbb{R}^2$.

\underline{Step I: weak derivatives.} Let $\bm \varphi \in \mathcal{C}^\infty_c (\mathbb{R}^2)$, we have
\begin{align*}
\int\limits_{\mathbb{R}} \int\limits_{-1}^1 \widetilde{\bu} (x, y) \bm \varphi_y (x, y) \, {\rm d}y \, {\rm d}x
&= \int\limits_{\mathbb{R}} \int\limits_0^{1}  \bu (x, y)  ( \bm \varphi_y (x, y) + \bm \varphi_y (x,  - y)) (1 \pm \eta_{\varepsilon} (y)) \, {\rm d}y \, {\rm d}x , 
\end{align*}
where $ \eta_{\varepsilon} (y) = \begin{cases} 1 & \text{ if } |y| > 2\varepsilon \\ 0 & \text{ if } |y| < \varepsilon \end{cases} $ is a smooth function with $|\eta_{\varepsilon}' | \leq \frac{c}{\varepsilon}$; note that $\bm \varphi (x, y) - \bm \varphi (x, - y) \notin \mathcal{C}^{\infty}_c (\Omega)$. After integration by parts and letting $\varepsilon \to 0_+$, we find that 
\begin{align*}
\int\limits_{\mathbb{R}} \int\limits_{-1}^{1} \widetilde{\bu} (x, y) \bm \varphi_y (x, y) \, {\rm d}y \, {\rm d}x
&= -\int\limits_{\mathbb{R}} \int\limits_{-1}^{1} \widetilde{\bu_y} (x, y)  \bm \varphi (x, y)  \, {\rm d}y \, {\rm d}x .
\end{align*}
Analogue, but simpler calculation works also for the derivative with respect to $x$. Therefore, $\widetilde{\bu}$ is a reasonable extension and \eqref{eq:est1} indeed holds.

\underline{Step II: relation between $\nabla \bu$ and $D\bu$.} As $\widetilde{\bu} $ is compactly supported in $\mathbb{R}\times (-1, 1)$, we can molify it to find $\widetilde{\bu}_{\varepsilon} \in \mathcal{C}^{\infty}_c (\mathbb{R} \times (-1, 1))$ and  $ || D \widetilde{\bu}_{\varepsilon} ||_{L^2( \mathbb{R} \times (-1, 1) )} \leq || D \widetilde{\bu} ||_{L^2( \mathbb{R} \times (-1, 1) )} \leq 2|| D \bu ||_{L^2(\Omega)}$. Due to differentiability and compact support of $\widetilde{\bu}_{\varepsilon}$ we can now show that
\begin{align*}
\int\limits_{\mathbb{R}}\int\limits_{-1}^{1} |\nabla \widetilde{\bu}_{\varepsilon}|^2
\leq 2 \int\limits_{\mathbb{R}}\int\limits_{-1}^{1}|D \widetilde{\bu}_{\varepsilon} |^2   ,
\end{align*}
and using weak lower semicontinuity of norms we obtain \eqref{eq:FirstKornApproximation}.

\underline{Step III: estimate of $\bu$ in $L^2$.} As $\Omega$ is bounded in $y$-direction and $\bu (x, 1) = 0$ for all $ x \in \mathbb{R}$, we can write $|\bu (x, y)|^2 = \Big| \int\limits_y^1 \frac{\partial \bu}{\partial y}  (x, s) \, {\rm d}s  \Big|^2$ and using \eqref{eq:FirstKornApproximation} we get 
\begin{align*}
|| \bu ||_{L^2(\Omega)}^2 
&\leq  \int\limits_{\mathbb{R}}  \int\limits_0^1 \int\limits_0^1 \Big| \frac{\partial \bu}{\partial y} (x, s) \Big|^2 \, {\rm d}s \, {\rm d}y  \, {\rm d}x 
\leq  \int\limits_{\Omega} |\nabla \bu|^2 
\leq 8 \int\limits_{\Omega} |D \bu|^2 .
\end{align*}
Similarly, 
\begin{align*}
|| \bu ||_{L^2(\Gamma)}^2 
= \int\limits_{\mathbb{R}} \Big| \int\limits_0^1 \frac{\partial \bu(x, s)}{\partial y} {\rm d}s \Big|^2 {\rm d}x
\leq  \int\limits_{\Omega} |\nabla \bu|^2 
\leq  8 \int\limits_{\Omega} |D \bu|^2 ,
\end{align*}
and \eqref{eq:SecondKornApproximation} holds. Finally, using \eqref{eq:est1}, its obvious analogue for $\nabla \bu$, \eqref{eq:SecondKornApproximation}, and \eqref{eq:FirstKornApproximation} together, we obtain
\begin{align*}
|| \widetilde{\bu} ||_{W^{1,2}( \mathbb{R} \times (-1, 1)) }^2 \leq 4|| \bu ||_{L^2(\Omega)}^2  + 4|| \nabla \bu ||_{L^2(\Omega)}^2  \leq 64 || D \bu ||_{L^2(\Omega)}^2 , 
\end{align*}
from which \eqref{eq:est2} follows.
\end{proof}

\begin{corollary}\label{thm:BadKorn}
Let $\bu \in V$. Then there exists a linear extension $E\bu$ of $\bu$ into $\mathbb{R}^2$ such that 
\begin{align*}
|| \nabla (E \bu ) ||_{L^2(\mathbb{R}^2)}^2 &\leq 8|| D\bu ||_{L^2(\Omega)}^2  , \,
|| E \bu ||_{L^2(\mathbb{R}^2)}^2 \leq 4|| \bu ||_{L^2(\Omega)}^2  , \text{ and }
|| E \bu ||_{W^{1,2}(\mathbb{R}^2)}^2 \leq 64|| D \bu ||_{L^2(\Omega)}^2  ,
\end{align*}
and $E\bu = 0$ a.e. outside of $\mathbb{R} \times (-1, 1)$ and has zero trace on $y = \pm 1$. There also hold the following inequalities
\begin{align}
|| \nabla \bu ||_{L^2(\Omega)}^2 &\leq 8 || D \bu ||_{L^2(\Omega)}^2 , \label{eq:FirstKorn} \\
|| \bu ||_{L^2(\Omega)}^2 &\leq 8|| D \bu ||_{L^2(\Omega)}^2 
\text{ and }
 || \bu ||_{L^2(\Gamma)}^2 \leq 8|| D \bu ||_{L^2(\Omega)}^2 . \label{eq:SecondKornWorse}
\end{align}
\end{corollary}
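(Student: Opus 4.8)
The plan is to promote \Cref{thm:TechnicalEstimates}, which is stated only for the smooth class $\mathcal{V}(\Omega)$, to all of $V=\overline{\mathcal{V}(\Omega)}^{V}$ by a density argument, and to read the extension operator $E$ directly off the reflection $\bu\mapsto\widetilde{\bu}$ from the lemma. Since the lemma already supplies every inequality we need for smooth fields, the corollary should require no new analytic input, only a careful transfer of these estimates to the completion.

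First I would define $E$ on the dense subspace $\mathcal{V}(\Omega)$ by $E\bu:=\widetilde{\bu}$; this map is manifestly linear. By \eqref{eq:est2} it satisfies $\|E\bu\|_{W^{1,2}(\mathbb{R}^2)}^2\le 64\|D\bu\|_{L^2(\Omega)}^2\le 64\|\bu\|_V^2$, where the last step uses $\|D\bu\|_{L^2(\Omega)}^2\le\|\bu\|_V^2$. Hence $E$ is a bounded linear map from $(\mathcal{V}(\Omega),\|\cdot\|_V)$ into the complete space $W^{1,2}(\mathbb{R}^2)$, and since $\mathcal{V}(\Omega)$ is dense in $V$ by construction, $E$ extends uniquely to a bounded linear operator $E:V\to W^{1,2}(\mathbb{R}^2)$.

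Next I would fix $\bu\in V$ and a sequence $\bu_k\in\mathcal{V}(\Omega)$ with $\bu_k\to\bu$ in $V$. The estimates \eqref{eq:FirstKornApproximation} and \eqref{eq:SecondKornApproximation} show that the single norm $\|\cdot\|_V$ controls $\|\nabla\,\cdot\,\|_{L^2(\Omega)}$ and $\|\cdot\|_{L^2(\Omega)}$, so $\bu_k\to\bu$ in fact in $W^{1,2}(\Omega)$ (this is the embedding $V\hookrightarrow W^{1,2}(\Omega)$ announced in Section~2.1); in particular $D\bu_k\to D\bu$ in $L^2(\Omega)$, the traces converge in $L^2(\Gamma)$, and $E\bu_k\to E\bu$ in $W^{1,2}(\mathbb{R}^2)$. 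Applying \eqref{eq:FirstKornApproximation}, \eqref{eq:SecondKornApproximation}, and the $L^2$ part of \eqref{eq:est1} to each $\bu_k$ and letting $k\to\infty$ yields \eqref{eq:FirstKorn}, \eqref{eq:SecondKornWorse}, and $\|E\bu\|_{L^2(\mathbb{R}^2)}^2\le 4\|\bu\|_{L^2(\Omega)}^2$ for the limit. The structural properties—that $E\bu$ restricts to $\bu$ on $\Omega$, vanishes a.e. outside $\mathbb{R}\times(-1,1)$, and has zero trace on $y=\pm1$—are all closed under $W^{1,2}(\mathbb{R}^2)$-convergence and hold for every $E\bu_k$, so they survive the limit.

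The one step that is not purely routine is the gradient bound on the extension, and this is where I would be careful about constants. Reflecting the interior estimate \eqref{eq:FirstKorn} across $y=0$ only gives the factor $16$; to obtain the sharper $\|\nabla(E\bu)\|_{L^2(\mathbb{R}^2)}^2\le 8\|D\bu\|_{L^2(\Omega)}^2$ I would instead exploit that $E\bu_k=\widetilde{\bu_k}$ is compactly supported in $\mathbb{R}\times(-1,1)$ and may be mollified as in Step~II of \Cref{thm:TechnicalEstimates}, so the elementary identity $\|\nabla\bm w\|_{L^2(\mathbb{R}^2)}^2+\|\operatorname{div}\bm w\|_{L^2(\mathbb{R}^2)}^2=2\|D\bm w\|_{L^2(\mathbb{R}^2)}^2$ gives $\|\nabla(E\bu_k)\|_{L^2(\mathbb{R}^2)}^2\le 2\|D(E\bu_k)\|_{L^2(\mathbb{R}^2)}^2\le 8\|D\bu_k\|_{L^2(\Omega)}^2$ via \eqref{eq:est1}, and passing to the limit gives the claim (the packaged $W^{1,2}(\mathbb{R}^2)$ bound then follows either by the same limit from \eqref{eq:est2} or by adding the $\nabla$ and $L^2$ estimates). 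I therefore expect the main obstacle to be nothing more than topology bookkeeping: ensuring that convergence in the single norm $\|\cdot\|_V$ simultaneously forces convergence of $D\bu_k$, of $\bu_k$ in $L^2(\Omega)$, of the traces in $L^2(\Gamma)$, and of $E\bu_k$ in $W^{1,2}(\mathbb{R}^2)$, so that each inequality of \Cref{thm:TechnicalEstimates} may be passed to the limit and the operator $E$ is well-defined independently of the approximating sequence.
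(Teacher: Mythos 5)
Your proposal is correct and follows essentially the same route as the paper: approximate $\bu\in V$ by elements of $\mathcal{V}(\Omega)$, observe that the reflection $\bu\mapsto\widetilde{\bu}$ is linear and bounded from the $V$-norm into $W^{1,2}(\mathbb{R}^2)$ by \eqref{eq:est2} (the paper phrases this as a Cauchy-sequence argument, you as the unique bounded extension of a densely defined operator, which is the same thing), pass each inequality of \Cref{thm:TechnicalEstimates} to the limit, and obtain the sharper gradient constant $8$ from the mollification and divergence identity of Step~II rather than from doubling the interior Korn bound. No gaps.
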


\begin{proof}
For $\bu \in V$ we find a sequence $\{ \bu_n \}_{n \in \mathbb{N}} \subset \mathcal{V}(\Omega)$ such that $|| \bu - \bu_n ||_{V} \leq \frac{1}{n}$. Now, \Cref{thm:TechnicalEstimates} gives us functions $\widetilde{\bu}_n$, which satisfy
\begin{align*}
|| \widetilde{\bu}_n - \widetilde{\bu}_m||_{W^{1,2}(\mathbb{R}^2)} ^2
&= || \widetilde{\bu_n - \bu_m}||_{W^{1,2}(\mathbb{R}^2)} ^2
\leq 64|| D(\bu_n - \bu_m)||_{L^2(\Omega)}^2 \\
&= 64|| D\bu_n - D\bu_m ||_{L^2(\Omega)}^2
\leq 64|| D\bu_n - D\bu_m ||_{V}^2.
\end{align*}
So, $\{ \widetilde{\bu}_n \}_{n \in \mathbb{N}}$ is Cauchy sequence, and because $W^{1,2}(\mathbb{R}^2)$ is a Banach space, $\{ \widetilde{\bu}_n \}_{n \in \mathbb{N}}$ converges to certain $E\bu$. Clearly, $E\bu = 0$ a.e. outside of $\mathbb{R} \times (-1, 1)$ and because it is a Sobolev function, we know that trace operator is well-defined, and so, $\text{tr}\, E\bu = 0$ on $y = -1$ and $y = 1$. First three inequalities in the statement then follow from Step II in the proof of \Cref{thm:TechnicalEstimates}, \eqref{eq:est1} and \eqref{eq:est2}. Finally, the estimates \eqref{eq:FirstKorn} and \eqref{eq:SecondKornWorse} follow from \eqref{eq:FirstKornApproximation} and \eqref{eq:SecondKornApproximation}, respectively, which are valid for the approximations $\bu_n$.
\end{proof}

\begin{remark}
If $\bu \in V \cap W^{2, p}(\Omega)$ for some $p \in (1, +\infty)$, then there is a linear extension $E\bu$ of $\bu$ satisfying
\begin{align*}
|| E \bu ||_{W^{1,p}(\mathbb{R}^2)} + || E \bu ||_{W^{2,p}(\mathbb{R}^2)}  \leq C ( || \bu ||_{W^{1,p}(\Omega)} + || \bu ||_{W^{2,p}(\Omega)} ) .
\end{align*}
\end{remark}

\begin{proof}
For $\bu \in \mathcal{V}(\Omega)$ one can define
\begin{align*}
\widetilde{\bu} (x, y) = 
\begin{cases} \bu (x, y) & \text{ if } 0 \leq y \leq 1 \\ 
5\bu (x, - y ) -20\bu  \big(x, -\frac{1}{2} y  \big) + 16\bu  \big(x,  -\frac{1}{4} y  \big)  & \text{ if } -1 \leq y \leq 0 \end{cases} .
\end{align*}
It is easy to check that derivatives from the left and from the right agree on $y = 0$, which means that $\widetilde{\bu} \in \mathcal{C}^2 (\mathbb{R} \times [-1, 1])$. However, value of $\widetilde{\bu} $ on $y = -1$ is not zero. To deal with it, one can introduce a suitable smooth cut-off function. It gives us an extension on $\mathbb{R} \times [-1, 1]$, which also has a zero trace. To extend it into a whole space we then use standard extension results. 
\end{proof}

The following version of the celebrated Ladyzhenskaya's inequality will be needed.
\begin{corollary}
There exists $C > 0 $ such that for any $\bu \in V$ we have the estimate
\begin{align}
|| \bu ||_{L^4(\Omega)}^2 &\leq C || \bu ||_{L^2(\Omega)} || \bu ||_{V} . \label{eq:Ladyzhenskaya}
\end{align}
\end{corollary}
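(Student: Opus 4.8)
The plan is to reduce \eqref{eq:Ladyzhenskaya} to the classical two-dimensional Ladyzhenskaya inequality on the whole plane by means of the extension operator $E$ already constructed in \Cref{thm:BadKorn}. Recall that on $\mathbb{R}^2$ one has the well-known estimate $\| \bm w \|_{L^4(\mathbb{R}^2)}^2 \leq C_0 \| \bm w \|_{L^2(\mathbb{R}^2)} \| \nabla \bm w \|_{L^2(\mathbb{R}^2)}$ valid for every $\bm w \in W^{1,2}(\mathbb{R}^2)$. The point of the extension is precisely to produce, out of $\bu \in V$, a genuine Sobolev function on all of $\mathbb{R}^2$ to which this inequality applies, while keeping the relevant norms controlled by the $V$-norm.

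Concretely, I would argue as follows. Fix $\bu \in V$ and let $E\bu \in W^{1,2}(\mathbb{R}^2)$ be its extension from \Cref{thm:BadKorn}. Since $E\bu$ restricts to $\bu$ on $\Omega$, we have $\| \bu \|_{L^4(\Omega)} \leq \| E\bu \|_{L^4(\mathbb{R}^2)}$, so it suffices to bound the right-hand side. Applying the full-space Ladyzhenskaya inequality to $\bm w = E\bu$ gives
\begin{align*}
\| \bu \|_{L^4(\Omega)}^2 \leq \| E\bu \|_{L^4(\mathbb{R}^2)}^2 \leq C_0 \, \| E\bu \|_{L^2(\mathbb{R}^2)} \, \| \nabla (E\bu) \|_{L^2(\mathbb{R}^2)} .
\end{align*}
Now the two factors on the right are exactly the quantities estimated in \Cref{thm:BadKorn}: the bounds $\| E\bu \|_{L^2(\mathbb{R}^2)}^2 \leq 4 \| \bu \|_{L^2(\Omega)}^2$ and $\| \nabla (E\bu) \|_{L^2(\mathbb{R}^2)}^2 \leq 8 \| D\bu \|_{L^2(\Omega)}^2$ yield
\begin{align*}
\| \bu \|_{L^4(\Omega)}^2 \leq C_0 \cdot 2 \| \bu \|_{L^2(\Omega)} \cdot 2\sqrt{2}\, \| D\bu \|_{L^2(\Omega)} = 4\sqrt{2}\, C_0 \, \| \bu \|_{L^2(\Omega)} \, \| D\bu \|_{L^2(\Omega)} .
\end{align*}
Finally, directly from the definition of the $V$-norm one has $\| D\bu \|_{L^2(\Omega)} \leq \| \bu \|_V$, and substituting this gives \eqref{eq:Ladyzhenskaya} with $C = 4\sqrt{2}\, C_0$.

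I do not expect any genuine obstacle here, since the essential difficulty, namely producing a norm-controlled $W^{1,2}(\mathbb{R}^2)$ extension of functions in $V$ on the unbounded channel, has already been overcome in \Cref{thm:TechnicalEstimates} and \Cref{thm:BadKorn}. The only point deserving a word of care is that the classical Ladyzhenskaya inequality is invoked on the unbounded domain $\mathbb{R}^2$, where it holds without any reference to a domain constant (the constant $C_0$ is universal); this is why it is advantageous to extend and then apply the inequality globally, rather than attempting to prove a Ladyzhenskaya-type inequality directly on the channel $\Omega$, where the unboundedness in the $x$-direction would otherwise be a nuisance.
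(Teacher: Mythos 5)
Your proposal is correct and follows essentially the same route as the paper: extend $\bu$ to $\mathbb{R}^2$ via the operator $E$ from \Cref{thm:BadKorn}, apply the classical Ladyzhenskaya inequality on the whole plane, and control the extension's norms by $\| \bu \|_{L^2(\Omega)}$ and $\| \bu \|_{V}$. The only cosmetic difference is that the paper states the plane inequality with the full $W^{1,2}(\mathbb{R}^2)$-norm while you use the gradient seminorm, which changes nothing of substance.
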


\begin{proof}
Due to \Cref{thm:BadKorn} and standard Ladyzhenskaya's inequality we find that for any $\bu \in V$  holds 
\begin{align*}
|| \bu ||_{L^4(\Omega)}^2
\leq ||  E\bu ||_{L^4(\mathbb{R}^2)}^2
\leq  \sqrt{2} ||  E\bu ||_{L^2(\mathbb{R}^2)} || E\bu ||_{W^{1,2}(\mathbb{R}^2)}
\leq  C ||  \bu ||_{L^2(\Omega)} || \bu ||_{V} .
\end{align*}
\end{proof}

The estimates in \Cref{thm:BadKorn} can be slightly improved. The best result would follow by finding
\begin{align*}
\lambda = \min_{0 \neq \bu \in V} \frac{|| \bu ||_{V} }{ || \bu ||_{W^{1,2}(\Omega)} } \, ,
\end{align*}
but this task would be rather complicated; primarily due to divergence-free condition. We will thus relax the conditions from $V$ and due to \eqref{eq:FirstKorn} we will work only with gradient instead of its symmetrical part. 

\begin{lemma}\label{thm:Korn}
Let $\bu \in V$. Then the following estimate holds
\begin{align}
|| \bu ||_{L^2(\Omega)}^2 &\leq  \frac{8}{\lambda^2(\alpha)}  || \bu ||_{V}^2 , \label{eq:SecondKorn} 
\end{align}
where $\lambda(\cdot)$ is an implicitly given function satisfying $\frac{1}{4}\pi^2 \leq \lambda^2(\alpha) \leq \pi^2$.
\end{lemma}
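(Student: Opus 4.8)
The plan is to identify $\lambda^2(\alpha)$ with the lowest eigenvalue of a one-dimensional Sturm--Liouville problem in the bounded $y$-direction and then to derive \eqref{eq:SecondKorn} by applying the corresponding scalar inequality on almost every vertical slice. As announced in the remark preceding the lemma, I relax the divergence-free constraint and, via \eqref{eq:FirstKorn}, work with $\nabla\bu$ in place of $D\bu$. Accordingly I define $\lambda^2(\alpha)$ to be the smallest $\mu$ for which the problem
\begin{align*}
-v'' = \mu v \text{ on } (0,1), \qquad v'(0) = \alpha v(0), \qquad v(1) = 0
\end{align*}
has a nontrivial solution; writing $v(y)=A\cos(\sqrt{\mu}\,y)+B\sin(\sqrt{\mu}\,y)$ and eliminating $A,B$ from the two boundary conditions yields the implicit characterization $\sqrt{\mu}\cos\sqrt{\mu}+\alpha\sin\sqrt{\mu}=0$, which is the equation defining $\lambda(\alpha)=\sqrt{\mu}$. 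Equivalently, $\lambda^2(\alpha)$ is the minimum of the one-dimensional Rayleigh quotient $(\int_0^1|v'|^2\,{\rm d}y+\alpha|v(0)|^2)/\int_0^1|v|^2\,{\rm d}y$ over $v$ with $v(1)=0$, the Robin condition at $y=0$ being the natural one. (By Plancherel in $x$ this $\mu$ is also the infimum of the relaxed two-dimensional quotient $(|| \nabla\bu ||_{L^2(\Omega)}^2+\alpha|| \bu ||_{L^2(\Gamma)}^2)/|| \bu ||_{L^2(\Omega)}^2$, the minimizing mode being $\xi=0$, but this is not needed below.)

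For the estimate itself only the slicewise version is required. I would first establish \eqref{eq:SecondKorn} on the dense subspace $\mathcal{V}(\Omega)$ and then pass to the limit in $V$; this is legitimate because, by \Cref{thm:BadKorn}, $V$-convergence controls the $L^2(\Omega)$-, $L^2(\Gamma)$- and $\nabla$-norms entering both sides. For $\bu\in\mathcal{V}(\Omega)$ and a.e.\ $x$, the map $y\mapsto\bu(x,y)$ vanishes at $y=1$, so applying the variational characterization $\int_0^1|v'|^2\,{\rm d}y+\alpha|v(0)|^2\ge\lambda^2(\alpha)\int_0^1|v|^2\,{\rm d}y$ of the lowest eigenvalue to each component gives
\begin{align*}
\int\limits_0^1 |\partial_y\bu(x,y)|^2 \, {\rm d}y + \alpha|\bu(x,0)|^2 \geq \lambda^2(\alpha)\int\limits_0^1 |\bu(x,y)|^2 \, {\rm d}y .
\end{align*}
Integrating in $x$ and dropping the nonnegative term $|| \partial_x\bu ||_{L^2(\Omega)}^2$ yields $\lambda^2(\alpha)|| \bu ||_{L^2(\Omega)}^2\le || \nabla\bu ||_{L^2(\Omega)}^2+\alpha|| \bm g ||_{L^2(\Gamma)}^2$, and then \eqref{eq:FirstKorn} bounds the right-hand side by $8|| D\bu ||_{L^2(\Omega)}^2+\alpha|| \bm g ||_{L^2(\Gamma)}^2\le 8|| \bu ||_V^2$, which is exactly \eqref{eq:SecondKorn}.

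Finally, both parts of the two-sided bound are elementary. For the lower bound I would use the Poincaré inequality $\int_0^1|v'|^2\,{\rm d}y\ge\frac{\pi^2}{4}\int_0^1|v|^2\,{\rm d}y$ valid for $v$ vanishing only at $y=1$ (extremal function $\cos(\tfrac{\pi}{2}y)$) together with $\alpha|v(0)|^2\ge0$, giving $\lambda^2(\alpha)\ge\frac{\pi^2}{4}$. For the upper bound I would insert the admissible test function $v(y)=\sin(\pi y)$, which satisfies $v(1)=0$ and, crucially, $v(0)=0$, so its Rayleigh quotient equals $\pi^2$ irrespective of $\alpha$ and hence $\lambda^2(\alpha)\le\pi^2$; monotonicity of $\lambda^2(\alpha)$ in $\alpha$ further identifies the endpoint values $\tfrac{\pi^2}{4}$ and $\pi^2$ as the Neumann ($\alpha\to0_+$) and Dirichlet ($\alpha\to+\infty$) limits. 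The one genuinely nonstandard point, compared with the bounded-domain Korn inequalities, is that the infimum behind $\lambda^2(\alpha)$ is \emph{not} attained because $\Omega$ is unbounded in $x$; this is exactly why $\lambda$ must be defined implicitly through the $y$-problem, and it is the only place where care is needed (e.g.\ if one insists on the sharp constant, via approximating test functions $\phi(x)\sin(\pi y)$ with $\phi$ spreading out).
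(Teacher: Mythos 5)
Your proof is correct and rests on the same core idea as the paper's: relax the divergence-free constraint, trade $D\bu$ for $\nabla\bu$ via \eqref{eq:FirstKorn}, and reduce to a mixed Robin--Dirichlet eigenvalue problem on $(0,1)$ whose first eigenvalue interpolates between $\pi^2/4$ (Neumann limit) and $\pi^2$ (Dirichlet limit). Two differences are worth recording. First, you apply the factor $8$ from \eqref{eq:FirstKorn} only to the gradient term and then bound $8\|D\bu\|_{L^2(\Omega)}^2+\alpha\|\bm g\|_{L^2(\Gamma)}^2\le 8\|\bu\|_V^2$, so your Robin weight is $\alpha$ and your characteristic equation is $\sqrt{\mu}\cos\sqrt{\mu}+\alpha\sin\sqrt{\mu}=0$; the paper instead puts the $8$ into the boundary term as well, minimizing $(\|\nabla\bu\|_{L^2(\Omega)}^2+8\alpha\|\bu\|_{L^2(\Gamma)}^2)/\|\bu\|_{L^2(\Omega)}^2$ and obtaining $\mu\cos\mu+8\alpha\sin\mu=0$. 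Your $\lambda^2(\alpha)$ is therefore slightly smaller than the paper's for $\alpha>0$ (a marginally weaker constant), but both lie in $[\tfrac{1}{4}\pi^2,\pi^2]$, which is all the lemma asserts and all that \Cref{thm:CrucialEstimates} actually uses downstream. Second, and to your credit, your slicewise argument --- applying the one-dimensional Rayleigh-quotient inequality on almost every vertical fiber and integrating in $x$ --- is cleaner than the paper's route, which treats the two-dimensional quotient as a minimum, derives the Euler--Lagrange system \eqref{eq:EulerLagrange} and separates variables, even though the infimum is not attained on the unbounded strip (a point you correctly flag). Your density argument for passing from $\mathcal{V}(\Omega)$ to $V$ is also sound, since \Cref{thm:BadKorn} controls all the norms appearing on both sides of \eqref{eq:SecondKorn}.
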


\begin{proof}
Let us consider the space
$$ \mathfrak{X} = \{  \bu \in \mathcal{C}^{\infty} (\overline{\Omega}); \bu \not\equiv 0,  \bu \cdot \bm n = 0  \text{ on } \Gamma , \text{supp}\, \bu \subset \Omega \cup \Gamma \text{ is compact}  \}  $$
and its closure $\overline{\mathfrak{X}}^{|| \cdot ||_V}$. We will show that the smallest positive eigenvalue $\lambda^2$ of the system (where $\bu \in \overline{\mathfrak{X}}^{|| \cdot ||_V}$)
\begin{align}
\begin{split}
\Delta \bu + \lambda^2 \bu &= 0 \text{ in } \Omega , \\
8\alpha u_1 - \partial_y u_1 &= 0 \text{ on } \Gamma , \\
u_2 &= 0  \text{ on } \Gamma , \\
\bu &= 0  \text{ on } \Gamma_1 , \label{eq:EulerLagrange}
\end{split}
\end{align}
satisfies $\frac{1}{4}\pi^2 \leq \lambda^2 \leq \pi^2$. This system represents Euler-Lagrange equations corresponding to the variational problem
\begin{align*}
\lambda^2 = \min_{\bu \in \overline{\mathfrak{X}}^{|| \cdot ||_V}} \frac{|| \nabla \bu ||_{L^2(\Omega)}^2 + 8\alpha || \bu ||_{L^2(\Gamma)}^2}{ || \bu ||_{L^2(\Omega)}^2 } \, .
\end{align*}
When we find it, then there holds $|| \bu ||_{L^2(\Omega)}^2  \leq \frac{1}{\lambda^2} ( || \nabla \bu ||_{L^2(\Omega)}^2 + 8\alpha || \bu ||_{L^2(\Gamma)}^2 )  $ and using  \eqref{eq:FirstKorn} we obtain \eqref{eq:SecondKorn}. 

It remains to find $\lambda^2$. It is simple to check that it is indeed a positive real number. Equations in \eqref{eq:EulerLagrange} are essentially two separated systems for $u_1$ and $u_2$. We start with the simpler one, i.e.
\begin{align*}
\Delta u_2 + \lambda^2 u_2 &= 0 \text{ in } \Omega , \\
u_2 &= 0  \text{ on } \Gamma \cup \Gamma_1 . 
\end{align*}
We first look for the (non-trivial) solution in the form of separted variables, in other words, let $u_2(x, y) = X(x) Y(y)$, we need to solve
\begin{align*}
Y''(y) &= - \mu^2 Y(y) , Y(0) = Y(1) = 0 , \\
X''(x) &= (\mu^2 - \lambda^2 ) X(x)  .
\end{align*}
The first equation gives us directly that $\mu_k = \pi k$ and
$$ Y_k (y) = a_k \sin \mu_k y, k \in \mathbb{N}, a_k \in \mathbb{R} . $$
We see that $\mu_k^2 - \lambda^2 < 0$, otherwise $X$ would be unbounded and we know that $|u_2(x, y)| \to 0$ as $|x| \to +\infty$. It is possible to then express $X(x)$ and $u_2$, but we do not find more information about $\lambda^2$. So, $\lambda \geq \pi^2$.

The same procedure applies for $u_1$, we deal with
\begin{align*}
Y''(y) &= - \mu^2 Y(y)  ,\\
8\alpha Y(0) - Y'(0) &= 0 ,\\
Y(1) &= 0 .
\end{align*}
The case $\mu^2 < 0$ gives $Y(y) = a (e^{\mu y} - e^{2\mu}e^{-\mu y})$ which leads to the trivial solution only, just like the case $\mu^2 = 0$. So, let $\mu^2 > 0$, then $Y(y) = a \big( \sin \mu y + \frac{\mu}{8\alpha} \cos \mu y \big)$, $\mu > 0$, and 
\begin{align*}
\mu \cos \mu  + 8\alpha \sin \mu  = 0 
\end{align*}
needs to be satisfied, which gives us solutions $\mu = \mu(\alpha)$. For $\alpha \in \{ 0, 0.1, 1, 10, \infty \}$ the smallest $\mu$ is $\{ \frac{\pi}{2}, 1.956, 2.804, 3.103, \pi \}$. From the equation for $X$ we find again $\mu^2 \leq \lambda^2$, so $\lambda^2 \geq \frac{1}{4}\pi^2$. 

For the minimizer $\bu$, $u_2 \equiv 0$ is admissible, and therefore, $\frac{1}{4}\pi^2 \leq \lambda^2(\alpha) \leq \pi^2$. Also, it is not difficult to observe that $\lambda (\cdot)$ is a continuous and increasing function of $\alpha$.

\end{proof}

The main result of this section follows. We note that the specific dependence on $\alpha$, $\beta$ and $L$ between $H_L$ and $V_L$-norms will be important in the fourth section.

\begin{corollary}\label{thm:CrucialEstimates}
Let $\bu \in V_L$, then
\begin{align}
|| \bu ||_{W^{1,2}(\Omega_L)}^2 &\leq 8 \Big( 1 + \frac{4L^2}{\pi^2} \Big) || \bu ||_{V_L}^2 , \label{eq:Korn} \\
|| \bu ||_{H_L}^2 & \leq  \Lambda  ||\bu||_{V_L}^2  ,  \label{eq:EquivalenceNorms}
\end{align}
where $\Lambda =  \frac{32}{\pi^2}L^2 + \beta \min \big\{  \frac{1}{\alpha},  8L \big\}$.
\end{corollary}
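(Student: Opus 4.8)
The strategy is to transport every quantity to the unit channel $\Omega$ by the rescaling $\bv(x,y) := \bu(Lx,Ly)$, and then to feed the sharp Poincar\'e-type bound of \Cref{thm:Korn} and the scale-invariant first Korn inequality \eqref{eq:FirstKorn} through this change of variables. First I would record the elementary scaling identities
\begin{align*}
\| D\bu \|_{L^2(\Omega_L)}^2 = \| D\bv \|_{L^2(\Omega)}^2, \quad
\| \nabla\bu \|_{L^2(\Omega_L)}^2 = \| \nabla\bv \|_{L^2(\Omega)}^2, \quad
\| \bu \|_{L^2(\Omega_L)}^2 = L^2 \| \bv \|_{L^2(\Omega)}^2, \quad
\| \bu \|_{L^2(\Gamma)}^2 = L \| \bv \|_{L^2(\Gamma)}^2 .
\end{align*}
The first two hold because $\nabla$ and $D$ pick up the same chain-rule factor $L$, which the Jacobian of the substitution cancels; the last two are plain changes of variables. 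The key observation is then that $\| \bu \|_{V_L}^2 = \| D\bv \|_{L^2(\Omega)}^2 + (\alpha L)\,\| \bv \|_{L^2(\Gamma)}^2$, i.e. $\|\bu\|_{V_L}^2$ is exactly the $V$-norm of $\bv$ computed with boundary weight $\alpha L$ in place of $\alpha$, and the map $\bu \mapsto \bv$ identifies $V_L$ with this rescaled copy of $V$.

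Next I would apply \Cref{thm:Korn} to $\bv$ with parameter $\alpha L$ (the lemma is valid for any positive weight), obtaining $\| \bv \|_{L^2(\Omega)}^2 \le 8 \lambda^{-2}(\alpha L)\,\| \bv \|_{V}^2$. Using the uniform lower bound $\lambda^2(\alpha L) \ge \tfrac14\pi^2$ and transporting back through the scaling identities gives
\begin{align*}
\| \bu \|_{L^2(\Omega_L)}^2 = L^2 \| \bv \|_{L^2(\Omega)}^2 \le \frac{8 L^2}{\lambda^2(\alpha L)}\,\| \bu \|_{V_L}^2 \le \frac{32 L^2}{\pi^2}\,\| \bu \|_{V_L}^2 ,
\end{align*}
which is the $\tfrac{32}{\pi^2}L^2$ term common to both \eqref{eq:Korn} and \eqref{eq:EquivalenceNorms}. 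For \eqref{eq:Korn} it then suffices to add the gradient contribution: \eqref{eq:FirstKorn} is scale-invariant, so $\| \nabla\bu \|_{L^2(\Omega_L)}^2 \le 8\| D\bu \|_{L^2(\Omega_L)}^2 \le 8\|\bu\|_{V_L}^2$, and summing the two bounds produces exactly the factor $8\bigl(1 + 4L^2/\pi^2\bigr)$.

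For \eqref{eq:EquivalenceNorms} I would estimate the trace norm $\|\bu\|_{L^2(\Gamma)}^2$ in two independent ways and keep the smaller. On one hand, $\|\bu\|_{V_L}^2 \ge \alpha\|\bu\|_{L^2(\Gamma)}^2$ straight from the definition of the $V_L$-norm, so $\|\bu\|_{L^2(\Gamma)}^2 \le \tfrac1\alpha\|\bu\|_{V_L}^2$. On the other hand, rescaling the boundary estimate in \eqref{eq:SecondKornWorse} (equivalently, writing $\bu(x,0)=-\int_0^L \partial_y\bu(x,s)\,{\rm d}s$, applying Cauchy--Schwarz, and then \eqref{eq:FirstKorn}) gives $\|\bu\|_{L^2(\Gamma)}^2 \le 8L\,\|D\bu\|_{L^2(\Omega_L)}^2 \le 8L\,\|\bu\|_{V_L}^2$. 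Combining these with the $L^2(\Omega_L)$ bound above yields precisely $\Lambda = \tfrac{32}{\pi^2}L^2 + \beta\min\{\tfrac1\alpha, 8L\}$. The one genuinely delicate point is the bookkeeping in the scaling step: one must notice that the rescaling converts the weight $\alpha$ into $\alpha L$, and then exploit that the bound $\lambda^2 \ge \tfrac14\pi^2$ of \Cref{thm:Korn} is \emph{uniform} in the weight, so that the resulting constants are independent of $\alpha$ — exactly the uniformity highlighted in the introduction and needed for the attractor-dimension estimate in the fourth section.
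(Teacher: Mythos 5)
Your proposal is correct and follows essentially the same route as the paper: rescale to $L=1$ so that \Cref{thm:Korn} (with weight $\alpha L$ and the uniform bound $\lambda^2\ge\pi^2/4$) yields $\|\bu\|_{L^2(\Omega_L)}^2\le\frac{32L^2}{\pi^2}\|\bu\|_{V_L}^2$, add \eqref{eq:FirstKorn} for \eqref{eq:Korn}, and bound the trace term by the better of $\frac1\alpha\|\bu\|_{V_L}^2$ and $8L\|D\bu\|_{L^2(\Omega_L)}^2$ for \eqref{eq:EquivalenceNorms}. The only cosmetic difference is that the paper packages the final step as a convex combination $\theta\|\bu\|_{V_L}^2+(1-\theta)\|\bu\|_{V_L}^2$ while you simply add the two bounds; the resulting constant $\Lambda$ is identical.
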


\begin{proof}
For $L = 1$, \Cref{thm:Korn} says $|| \bu ||_{L^2(\Omega)}^2 \leq  \frac{32}{\pi^2}  || \bu ||_{V}^2$, and in the spirit of \Cref{thm:NonDimensionalization} we see that
\begin{align}
|| \bu ||_{L^2(\Omega_L)}^2 &\leq  C_1 || \bu ||_{V_L}^2  \label{eq:est5}
\end{align}
with $C_1 = \frac{32L^2}{\pi^2}$. Similarly, from \eqref{eq:FirstKorn} we get
\begin{align*}
|| \nabla \bu ||_{L^2(\Omega_L)}^2 &\leq  8  || D \bu ||_{L^2(\Omega_L)}^2 .
\end{align*}
Hence, adding these inequalities, the estimate \eqref{eq:Korn} follows. Now, consider $\bu \in V_L$ and $\theta = \frac{\beta}{\beta + \alpha C_1} \in (0, 1)$. Using \eqref{eq:est5} we then obtain
\begin{align*}
|| \bu ||_{V_L}^2 
= \theta || \bu ||_{V_L}^2 + (1 - \theta) || \bu ||_{V_L}^2 
\geq \frac{\alpha \theta}{\beta} \beta || \bu ||_{L^2(\Gamma)}^2 + \frac{1-\theta}{C_1} || \bu ||_{L^2(\Omega_L)}^2
= \frac{\alpha}{\beta + \alpha C_1} ||\bu||_{H_L}^2 . 
\end{align*}
Of course, this estimate favors $\alpha \gg 1$. If $\alpha \ll 1$, we might proceed differently. From \eqref{eq:SecondKornWorse} we deduce that 
\begin{align*}
|| \bu ||_{L^2(\Gamma)}^2 &\leq  C_2 || D \bu ||_{L^2(\Omega_L)}^2 ,
\end{align*}
where $C_2 = 8L$. Considering now $\theta = \frac{\beta C_2}{C_1 + \beta C_2}$ we apply this inequality together with \eqref{eq:est5} to find
\begin{align*}
|| \bu ||_{V_L}^2 
&\geq \frac{\theta}{\beta C_2} \beta || \bu ||_{L^2(\Gamma)}^2 + \frac{1-\theta}{C_1} || \bu ||_{L^2(\Omega_L)}^2 = \frac{1}{C_1 + \beta C_2} ||\bu||_{H_L}^2 .
\end{align*}
Hence $\max \Big\{\frac{\alpha}{\beta + \alpha C_1} ,  \frac{1}{C_1 + \beta C_2 } \Big\} ||\bu||_{H_L}^2 \leq ||\bu||_{V_L}^2$ and the result follows.
\end{proof}
Note that when $\beta \to +\infty$, then $\Lambda $ tends to $+\infty$. This will later result in blow-up in the constant in the estimate of the dimension of the attractor for large $\beta$; see \eqref{eq:DimensionEstimate}. On the other hand, if $\alpha$ or $\beta$ are close to zero, then $\Lambda$ is well-behaved.

The above result shows that $V_L \hookrightarrow H_L$. It is now a simple observation that $V_L \hookrightarrow H_L \hookrightarrow V_L^*$ and both embeddings are continuous and dense. Further, duality pairing between $V_L$ and $V_L^*$ can be now defined in a standard way as a continuous extension of the inner product on $H_L$.

\begin{remark}
All results in this section could be extended to $d$-dimensional channel $\mathbb{R}^{d-1} \times (0, L) $ instead of $\Omega$.
\end{remark}

\section{Existence of strong solution}
Our goal now is to show the existence of the unique solution $\bu$ and its higher regularity. In this section, we focus solely on $L = 1$. The proof relies on the approximation of $\Omega$ by a sequence of bounded domains $\{ Q_n\}_{n \in \mathbb{N}}$ in the spirit of Theorem 4.10 from \cite{RRS2016}. For bounded $\Omega$, the regularity result is known due to \cite{PrZe24}. Nevertherless, we can't just invoke that result, as it is unclear if the constants would be independent of $n$. We start with $L^\infty (W^{1, 2})$-regularity, which is based on the Galerkin approximation. The procedure is the same as in a couple of relevant theorems from \cite{PrZe24}, however, we will do that in detail here because of the parameter $n$ and also due to the non-linear terms.

\begin{proof}[Proof of \Cref{thm:ExistenceInfinitePlates}] The proof is divided into six parts. We point out that we will not relabel the original sequence when selecting a subsequence, and $C$ stands for a generic constant whose value may change line to line.

\underline{Step I: reduction to bounded domains.} For $n \in \mathbb{N}$ we introduce the sets
\begin{align*}
Q_n &= (-n, n) \times (0, 1) , \, \Gamma_n = (-n, n) \times \{ 0 \} 
\end{align*}
and the space
\begin{align*}
\mathcal{V}(Q_n) = \{ \bu  \in \mathcal{C}^{\infty} (\overline{\Omega}); \text{ div}\, \bu = 0 \text{ in }  Q_n, \bu \cdot \bm n = 0 \text{ on } \Gamma_n, \text{supp}\, \bu \subset (-n, n) \times [0, 1) \} .
\end{align*}
Given our data, we clearly can construct the sequence $\{ \bu_{0, n} \}_{n \in \mathbb{N}} \subset \mathcal{V}(Q_n)$ such that
\begin{align}
\bu_{0, n} \to \bu_0 &\text{ in } H  \text{ and }  || \bu_{0, n} ||_{H(Q_n)} \leq || \bu_0 ||_H  . \label{eq:MonotoneNorms}
\end{align}
Here, $V(Q_n)$ is the closure of $\mathcal{V}(Q_n)$ in the norm $|| (\bu, \bm g) ||_{V(Q_n)}^2 = || D \bu ||_{L^2(Q_n)}^2 + \alpha || \bm g ||_{L^2(\Gamma_n)}^2 $ and analogously is defined $H(Q_n)$. In the spirit of section 2, one observes that these spaces are Hilbert spaces, analogue of \Cref{thm:CrucialEstimates} holds and there is the Gelfand triplet $V(Q_n) \hookrightarrow H(Q_n)  \hookrightarrow V^*(Q_n) = (V(Q_n))^*$. Notice that $V(Q_n) \hookrightarrow V$, hence $V^* \hookrightarrow V^*(Q_n)$ and we have 
\begin{align}
|| (\bm f, \bm h) ||_{L^2(0, T; V^*(Q_n))} \leq  || (\bm f, \bm h) ||_{L^2(0, T; V^*)}   . \label{eq:MonotoneNorms2}
\end{align}

Let us note that any function from $\mathcal{V}(\Omega)$ can be approximated (in $V$ or $H$ norm) by a function from  $\mathcal{V}(Q_n)$ for $n \in \mathbb{N}$ big enough. Hence, if $\bu \in V$ then there exists a sequence $\{ \bu_n \}_{n \in \mathbb{N}}$ of functions from $V(Q_n)$ such that $\bu_n \to \bu$ in $V$.

\underline{Step II: existence of solutions in $Q_n$.} Now, let $n \in \mathbb{N}$ be fixed. We are dealing with 
\begin{align*}
\partial_t \bu_n - \text{div}\, \mathbb{S} (D\bu_n)  + (\bu_n \cdot \nabla)\bu_n + \nabla \pi_n &=    \bm f \text{ in } (0, T) \times Q_n ,   \\
\beta \partial_t \bu_n + [ \alpha \bm s (\bu_n) + \mathbb{S} (D\bu_n) \bm n ]_\tau   &=  \beta  \bm h \text{ on } (0, T) \times \Gamma_n ,  \\
\bu_n &= 0 \text{ on } (0, T) \times \partial Q_n \setminus \Gamma_n ,
\end{align*}
together with divergence-free condition, impermeability and the initial condition $\bu_{0, n}$. To solve it, we modify a standard existence proof from \cite{PrZe24} or \cite{ABM2021} (also for 3D case). We need the following lemma. 

\begin{proposition}\label{thm:Basis}
There exists the sequence $\{ \bm \omega_k \}_{k\in \mathbb{N}}$ which is a basis in both $V(Q_n)$ and $H(Q_n)$, it is orthogonal in $V(Q_n)$ and orthonormal in $H(Q_n)$. Further, there is a non-decreasing sequence $\{\mu_k\}_{k\in \mathbb{N}}$ with $\lim_{k \rightarrow +\infty} \mu_k = +\infty$. For every $k \in \mathbb{N}$, the function $\bm \omega_k$ solves 
\begin{align*}
(\bm \omega_k, \bm \varphi)_{V(Q_n)} = \mu_k (\bm \omega_k, \bm \varphi)_{H(Q_n)}  
\end{align*}
for all $\bm \varphi \in V(Q_n) $. Moreover, for the projection $P^M$ of $V(Q_n)$ to the linear hull of $\{ \bm \omega_k \}_{k = 1}^M$ defined by the formula $ P^M \bu = \sum_{k = 1}^M (\bu, \bm \omega_k)_{H(Q_n)} \bm \omega_k$, it holds that for any $\bu \in V(Q_n)$
\begin{align*}
||P^M \bu ||_{H(Q_n)} & \leq || \bu ||_{H(Q_n)}, \, ||P^M \bu ||_{V(Q_n)}  \leq || \bu ||_{V(Q_n)}, \text{ and } 
P^M \bu  \rightarrow \bu \text{ in } V(Q_n) \text{ as } M \rightarrow +\infty .
\end{align*}
\end{proposition}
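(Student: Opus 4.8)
The plan is to realize the relation $(\bm\omega_k, \bm\varphi)_{V(Q_n)} = \mu_k (\bm\omega_k, \bm\varphi)_{H(Q_n)}$ as the eigenvalue problem of a compact, self-adjoint, positive operator on $H(Q_n)$ and then to read off every assertion from the Hilbert–Schmidt spectral theorem. The whole argument hinges on the embedding $V(Q_n) \hookrightarrow H(Q_n)$ being \emph{compact}; this is exactly where the boundedness of $Q_n$ enters, and it is the reason we first descend to $Q_n$ rather than working on $\Omega$ directly. Concretely, the analogue of \Cref{thm:CrucialEstimates} on $Q_n$ gives $\| \bu \|_{W^{1,2}(Q_n)} \leq C(n) \| \bu \|_{V(Q_n)}$, so $V(Q_n) \hookrightarrow W^{1,2}(Q_n)$ is bounded; the map $\bu \mapsto (\bu, \operatorname{tr}\bu)$ into $H(Q_n)$ is then compact because $W^{1,2}(Q_n) \hookrightarrow L^2(Q_n)$ is compact by Rellich--Kondrachov and $W^{1,2}(Q_n) \hookrightarrow L^2(\Gamma_n)$ is a compact trace embedding, both of which use that $Q_n$ is bounded.

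Next I would introduce the solution operator. For $\bm F \in H(Q_n)$, the Riesz representation theorem applied to the continuous functional $\bm\varphi \mapsto (\bm F, \bm\varphi)_{H(Q_n)}$ on the Hilbert space $V(Q_n)$ yields a unique $\bu =: T\bm F \in V(Q_n)$ with $(T\bm F, \bm\varphi)_{V(Q_n)} = (\bm F, \bm\varphi)_{H(Q_n)}$ for all $\bm\varphi \in V(Q_n)$. Composing with the compact embedding shows that $T : H(Q_n) \to H(Q_n)$ is compact. Testing the defining identities for $T\bm F$ and $T\bm G$ against $T\bm G$ and $T\bm F$ respectively gives $(T\bm F, \bm G)_{H(Q_n)} = (T\bm F, T\bm G)_{V(Q_n)} = (\bm F, T\bm G)_{H(Q_n)}$, so $T$ is self-adjoint; taking $\bm G = \bm F$ shows $(T\bm F, \bm F)_{H(Q_n)} = \| T\bm F \|_{V(Q_n)}^2 \geq 0$, with equality only for $\bm F = 0$ (using density of $V(Q_n)$ in $H(Q_n)$), so $T$ is positive and injective. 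The spectral theorem then furnishes an $H(Q_n)$-orthonormal basis $\{ \bm\omega_k \}$ of eigenfunctions, $T\bm\omega_k = \lambda_k \bm\omega_k$ with $\lambda_k > 0$ and $\lambda_k \to 0$; setting $\mu_k = 1/\lambda_k$ (reordered to be non-decreasing) gives $\mu_k \to +\infty$ and, unwinding the definition of $T$, exactly $(\bm\omega_k, \bm\varphi)_{V(Q_n)} = \mu_k (\bm\omega_k, \bm\varphi)_{H(Q_n)}$.

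From here the remaining claims are formal. Taking $\bm\varphi = \bm\omega_l$ gives $(\bm\omega_k, \bm\omega_l)_{V(Q_n)} = \mu_k \delta_{kl}$, so the $\bm\omega_k$ are $V(Q_n)$-orthogonal with $\| \bm\omega_k \|_{V(Q_n)}^2 = \mu_k$, and hence $\bm e_k := \bm\omega_k / \sqrt{\mu_k}$ is $V(Q_n)$-orthonormal. Completeness in $V(Q_n)$ follows because any $\bu \in V(Q_n)$ that is $V(Q_n)$-orthogonal to every $\bm\omega_k$ satisfies $0 = (\bu, \bm\omega_k)_{V(Q_n)} = \mu_k (\bu, \bm\omega_k)_{H(Q_n)}$, hence $\bu \perp \bm\omega_k$ in $H(Q_n)$ for all $k$ and therefore $\bu = 0$ by completeness in $H(Q_n)$. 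The key observation for the projection estimates is that $P^M$ is simultaneously the orthogonal projection in $H(Q_n)$ and in $V(Q_n)$: indeed $(\bu, \bm\omega_k)_{H(Q_n)} = \mu_k^{-1} (\bu, \bm\omega_k)_{V(Q_n)}$ (test the eigen-relation with $\bu$), so that $P^M\bu = \sum_{k=1}^M (\bu, \bm e_k)_{V(Q_n)} \bm e_k$ as well. Bessel's inequality in each space then yields $\| P^M\bu \|_{H(Q_n)} \leq \| \bu \|_{H(Q_n)}$ and $\| P^M\bu \|_{V(Q_n)} \leq \| \bu \|_{V(Q_n)}$, and the convergence $P^M\bu \to \bu$ in $V(Q_n)$ is just the expansion of $\bu$ in the complete orthonormal system $\{ \bm e_k \}$.

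The only genuinely substantial step is the compactness of $V(Q_n) \hookrightarrow H(Q_n)$; once that is in hand, everything else is the standard Stokes-operator machinery. I expect the mild care needed there to be twofold: confirming that the Korn-type bound of \Cref{thm:CrucialEstimates} transfers to $Q_n$ with an ($n$-dependent, but here harmless) constant, and invoking the compactness of the trace embedding into $L^2(\Gamma_n)$; both are available precisely because $Q_n$ is a bounded Lipschitz domain.
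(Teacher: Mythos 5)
Your proof is correct and complete. For this statement the paper itself only points to Lemma A.1 of \cite{ABM2021} (resp.\ Lemma 3.1 of \cite{EM-dis}), and the argument there is precisely the one you give: the Riesz/solution-operator construction of a compact, self-adjoint, positive, injective operator on $H(Q_n)$, with the compactness of $V(Q_n)\hookrightarrow H(Q_n)$ (Rellich--Kondrachov plus the compact trace embedding on the bounded domain $Q_n$) as the essential ingredient, and the projection bounds read off from the simultaneous $H$- and $V$-orthogonality of the eigenbasis.
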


\begin{proof}
It is a straightforward analogue of Lemma A.1. in \cite{ABM2021} or Lemma 3.1. in \cite{EM-dis}.
\end{proof}

Because of it, we introduce the Galerkin $m$-th level approximation
\begin{align*}
\bu_n^m (t, x, y) = \sum_{k = 1}^m c_{n, k}^m (t) \bm \omega_k(x, y)
\end{align*}
which satisfies (for $k = 1, \dots , m$)
\begin{align}
( \partial_t \bu_n^m, \bm \omega_k )_{H(Q_n)} + \int\limits_{Q_n} \mathbb{S}( D \bu_n^m) : D \bm \omega_k + \alpha \int\limits_{\Gamma_n} \bm s(\bu_n^m) \cdot \bm \omega_k  + \int\limits_{Q_n} (\bu_n^m\cdot \nabla) \bu_n^m\cdot \bm \omega_k &= \langle (\bm f, \bm h), \bm \omega_k \rangle \label{eq:Galerkin}
\end{align}
and
\begin{align*}
c_{n,k}^m(0) &= ( \bu_{0, n} , \bm \omega_k)_{H(Q_n)} .
\end{align*}
We multiply \eqref{eq:Galerkin} by $c_{n,k}^m$, sum over $k$'s and integrate over time inteval $(0, t)$ to find
\begin{align}
\frac{1}{2}|| \bu_n^m(t) ||_{H(Q_n)}^2 + \int\limits_0^t \Big( \int\limits_{Q_n} \mathbb{S}( D \bu_n^m) : D \bu_n^m  + \alpha \int\limits_{\Gamma_n} \bm s (\bu_n^m) \cdot \bu_n^m  \Big)  \leq \frac{1}{2}|| \bu_{0, n} ||_{H(Q_n)}^2 +  \int\limits_0^t \langle (\bm f, \bm h), \bu_n^m\rangle . \label{eq:EnergyGalerkin}
\end{align}
Due to \eqref{eq:Coercivity}, \eqref{eq:CoercivityBoundary}, \eqref{eq:MonotoneNorms}, Gr{\"o}nwall's inequality  and the duality argument, we achieve
\begin{align}
|| \bu_n^m||_{L^2(0, T; V(Q_n))} + || \bu_n^m||_{L^{\infty}(0, T; H(Q_n))} + || \partial_t \bu_n^m||_{L^2(0, T; V^*(Q_n))}  \leq C, \label{eq:UniformEstimates}
\end{align}
where $C$ is uniform in both $m$ and $n$. Therefore, $\bu_n^m$ converges (in $m$) to weak limit denoted by $\bu_n$. Thanks to (analogue of) \Cref{thm:CrucialEstimates} we control also $L^2(W^{1,2})$-norm, and due to Aubin-Lions theorem we get strong convergence in $L^2(0, T; L^2(Q_n))$. Due to \eqref{eq:Growth} and \eqref{eq:GrowthBoundary} we obtain the following weak convergences
\begin{align*}
\mathbb{S}(D\bu_n^m) \rightharpoonup \overline{\mathbb{S}(D\bu_n)} \text{ in } L^2(Q_n) \text{ and } \bm s (\bu_n^m) \rightharpoonup \overline{\bm s (\bu_n)} \text{ in } L^2(\Gamma_n) \text{ as } m \to + \infty .
\end{align*}

Next, we consider a test function $\bm \varphi \in V(Q_n)$ and $\psi \in \mathcal{C}_c^{\infty}(0, T)$ and multiply $k$-th equation of \eqref{eq:Galerkin} by $\psi (\bm \varphi, \bm \omega_k)_{H(Q_n)}$ and sum over $k = 1, \dots , M$, $M \leq m$. We then integrate the result over the time interval and use our convergences to pass with $m \to +\infty$. Next, we employ convergence $P^M \bm \varphi \to \bm \varphi$ in $V(Q_n)$ from \Cref{thm:Basis} to show that for a.e. $t \in (0, T)$ and all $\bm \varphi \in V(Q_n)$ there holds
\begin{align*}
\langle \partial_t \bu_n, \bm \varphi \rangle_{V(Q_n)} + \int\limits_{Q_n} \overline{\mathbb{S}(D\bu_n)} : D \bm \varphi + \alpha \int\limits_{\Gamma_n} \overline{\bm s (\bu_n)} \cdot \bm \varphi + \int\limits_{Q_n}( \bu_n \cdot \nabla ) \bu_n \cdot \bm \varphi = \langle (\bm f, \bm h), \bm \varphi \rangle _{V(Q_n)} . 
\end{align*}

Thanks to $\bu \in L^2 (0, T; V)$ and $\partial_t \bu \in L^2 (0, T; V^*)$, we see that $\bu_n \in \mathcal{C}([0, T]; H(Q_n))$, hence the initial data attainment is clear. It remains to identify the limits of non-linear terms, i.e. we wish to show $\overline{\mathbb{S}(D\bu_n)} = \mathbb{S}(D\bu_n)$ and $ \overline{\bm s (\bu_n)} =  \bm s (\bu_n)$. As the analogous conclusion will also be needed for $n \to + \infty$, we postpone it to Step V, where this more general case will be treated. Thus, $\bu_n$ solves our system in $Q_n$, i.e. for a.e. $t \in (0, T)$ and all $\bm \varphi \in V(Q_n)$ we have
\begin{align}
\langle \partial_t \bu_n, \bm \varphi \rangle_{V(Q_n)} + \int\limits_{Q_n} \mathbb{S}(D\bu_n) : D \bm \varphi + \alpha \int\limits_{\Gamma_n} \bm s (\bu_n) \cdot \bm \varphi + \int\limits_{Q_n}( \bu_n \cdot \nabla ) \bu_n \cdot \bm \varphi = \langle (\bm f, \bm h), \bm \varphi \rangle _{V(Q_n)} . \label{eq:WFApprox}
\end{align}

\underline{Step III: existence of solution in $\Omega$.} Functions $\bu_n$ have zero trace on $\{\pm n \} \times [0, 1]$, so we can extend each $\bu_n$ by zero to the whole $\Omega$ and we immediately get the following estimate
\begin{align*}
|| \bu_n ||_{L^2(0, T; V)} + || \bu_n ||_{L^{\infty}(0, T; H)}  \leq C .
\end{align*}
Hence, there exists certain function $\bu \in L^2(0, T; V)$ which is a weak limit of $\{ \bu_n \}_n$. And just like before, we have also weak limits $\overline{\mathbb{S}(D\bu)}$ and $\overline{\bm s (\bu)}$. We observe that $\partial_t \bu_n$ are also naturally extended by 0 into $\Omega$.

To deal with the convective term, the strong convergence of $\bu_n$ in $L^2(L^2)$ will be needed. However, $\Omega$ is not a bounded set, so we cannot apply Aubin-Lions theorem directly as we do not have compactness. Nevertherles, thanks to \eqref{eq:UniformEstimates} and the simple fact that $V^*(Q_n) \subset V^*(Q_N)$ for all $n \geq N$, we get $|| \partial_t \bu_n ||_{L^2(0, T; V^*(Q_N))}  \leq C$. Together with the previous estimate and \Cref{thm:CrucialEstimates} we obtain that for all $n \geq N$ there holds
\begin{align*}
|| \bu_n ||_{L^2(0, T; W^{1,2}(Q_N))} +|| \partial_t \bu_n ||_{L^2(0, T; V^*(Q_N))}  \leq C .
\end{align*}
Now, thanks to Aubin-Lions theorem, we see that for every $N \in \mathbb{N}$ we have 
\begin{align*}
\bu_n \to \bu \text{ in } L^2(0, T; L^2(Q_N)) .
\end{align*}

It remains to verify that this limit $\bu$ indeed satisfies \eqref{eq:WF}. Due to density of $\mathcal{V}(Q_n)$ in $V$ it is enough to consider $\bm \varphi \in \mathcal{V}(Q_N)$ for some $N \in \mathbb{N}$. Such $\bm \varphi$ is of course in $V(Q_N)$ and as $V(Q_N) \subset V(Q_n) $, for $n \geq N$, and due to its compact support in $Q_N$, we see that \eqref{eq:WFApprox} gives us
\begin{align*}
\langle \partial_t \bu_n, \bm \varphi \rangle_{V(Q_N)} + \int\limits_{Q_N} \mathbb{S}(D\bu_n) : D \bm \varphi + \alpha \int\limits_{\Gamma_N} \bm s (\bu_n) \cdot \bm \varphi + \int\limits_{Q_N}( \bu_n \cdot \nabla ) \bu_n \cdot \bm \varphi = \langle (\bm f, \bm h), \bm \varphi \rangle _{V(Q_N)} \text{ for all } n \geq N .
\end{align*}
We multiply it by $\psi \in \mathcal{C}_c^{\infty}(0, T)$ and integrate over time interval $(0, T)$ and pass to the limit using our weak convergences and strong convergence in $ L^2(0, T; L^2(Q_N))$. Due to arbitrariness of $\psi$, density of $\mathcal{V}(Q_n)$ in $V$, and \eqref{eq:MonotoneNorms} we achieve
\begin{align}
\langle \partial_t \bu, \bm \varphi \rangle + \int\limits_{\Omega} \overline{ \mathbb{S}(D\bu) } : D \bm \varphi + \alpha \int\limits_{\Gamma} \overline{ \bm s (\bu) } \cdot \bm \varphi + \int\limits_{\Omega}( \bu \cdot \nabla ) \bu \cdot \bm \varphi = \langle (\bm f, \bm h), \bm \varphi \rangle \label{eq:WFApprox2}
\end{align}
for all $\bm \varphi \in V$. We note that $\partial_t \bu \in L^2(0, T; V^*)$ follows naturally and to conclude the proof of existence part, we need to identify weak limits $\overline{ \mathbb{S}(D\bu) }$ and $\overline{ \bm s (\bu) } $.

\underline{Step IV: weak limits of non-linearities.} In this part, we employ monotonicity arguments together with the well-known Minty's trick to show that  $\overline{ \mathbb{S}(D\bu) } = \mathbb{S}(D\bu)$ and $\overline{ \bm s (\bu) } =  \bm s (\bu)$. In contrast with the very similar procedure for $ m \to + \infty$, i.e. the identification of $\overline{ \mathbb{S}(D\bu_n) }$ and $\overline{ \bm s (\bu_n) } $, we need to be careful as we work with both $\Omega$ and $Q_n$ and we pass with $n \to + \infty$. First, we set $\bm \varphi = \bu$ in \eqref{eq:WFApprox2} to obtain 
\begin{align}
\int\limits_0^t \int\limits_{\Omega} \overline{\mathbb{S}(D\bu)} : D \bu + \alpha \int\limits_0^t \int\limits_{\Gamma} \overline{\bm s (\bu)} \cdot \bu =
\int\limits_0^t \langle (\bm f, \bm h), \bu \rangle  + \frac{1}{2} || \bu_0 ||_H^2 - \frac{1}{2} || \bu (t) ||_H^2  . \label{eq:equality2}
\end{align}

Now, as \eqref{eq:Coercivity} and \eqref{eq:CoercivityBoundary} hold, we have
\begin{align}
\begin{split}
c_1\int\limits_0^t || \bu_n ||_{V(Q_n)}^2 
&\leq 
\int\limits_0^t \int\limits_{Q_n} \mathbb{S}(D\bu_n) : D \bu_n + \alpha \int\limits_0^t \int\limits_{\Gamma_n} \bm s (\bu_n) \cdot \bu_n \\
&\leq 
\int\limits_0^{t+\varepsilon} \int\limits_{Q_n} \eta \, \mathbb{S}(D\bu_n) : D \bu_n  + \alpha \int\limits_0^{t+\varepsilon} \int\limits_{\Gamma_n} \eta \, \bm s (\bu_n) \cdot \bu_n    , \label{eq:inequality1}
\end{split}
\end{align}
where (for small $\varepsilon > 0$)
\begin{align*}
\eta (\tau) = 
\begin{cases}
1 \, & \text{ if } 0 \leq \tau \leq t \\ 1 + \frac{t-\tau}{\varepsilon} \, &  \text{ if } t < \tau < t + \varepsilon \\ 0 \, &  \text{ if } t + \varepsilon \leq \tau \leq T  
\end{cases} .
\end{align*}
We set $\bm \varphi = \bu_n$ in \eqref{eq:WFApprox}, multiply it by $\eta$ and integrate over $(0, T)$ to find
\begin{align*}
\int\limits_0^{t+\varepsilon} \int\limits_{Q_n} \eta \, \mathbb{S}(D\bu_n) : D \bu_n   + & \alpha \int\limits_0^{t+\varepsilon} \int\limits_{\Gamma_n} \eta \, \bm s (\bu_n) \cdot \bu_n   
=\int\limits_0^{t+\varepsilon} \eta \langle (\bm f, \bm h) , \bu_n\rangle_{V(Q_n)} - \frac{1}{2}\int\limits_0^{t+\varepsilon} \eta \cdot \frac{{\rm d}}{{\rm d}t} ||\bu_n ||_{H(Q_n)}^2
\\
&= 
\int\limits_0^{t+\varepsilon} \eta \langle (\bm f, \bm h) , \bu_n\rangle_{V(Q_n)}  + \frac{1}{2} || \bu_{0,n} ||_{H(Q_n)}^2  -\frac{1}{2\varepsilon} \int\limits_t^{t+\varepsilon} ||\bu_n||_{H(Q_n)}^2  \\
&= \int\limits_0^{t+\varepsilon} \eta \langle (\bm f, \bm h) , \bu_n\rangle  + \frac{1}{2} || \bu_{0,n} ||_{H}^2  -\frac{1}{2\varepsilon} \int\limits_t^{t+\varepsilon} ||\bu_n||_{H}^2   .
\end{align*}
Here we used that $\bu_n$ is extended by zero outside of $Q_n$. For $m \to +\infty$ we would obtain analogous relation as follows. We would multiply \eqref{eq:Galerkin} by $c_{n,k}^m$, sum it over $k$'s (the convective term vanishes again), multiply by $\eta (\tau)$ and integrate over the whole time inteval $(0, T)$. Instead of  $|| \bu_{0,n} ||_{H}^2$ we would deal with $ || P^m \bu_{0,n} ||_{H(Q_n)}^2$, but \Cref{thm:Basis} bounds it from above by  $|| \bu_{0,n} ||_{H(Q_n)}^2$.

Thanks to our weak convergences (in the whole space) together with the weak lower semicontinuity of the norm and \eqref{eq:MonotoneNorms}, we can now deduce from \eqref{eq:inequality1} that
\begin{align*}
\limsup_{n \to + \infty} \int\limits_0^t \int\limits_{Q_n}  \mathbb{S}(D\bu_n) : D \bu_n  +  \alpha \int\limits_0^t \int\limits_{\Gamma_n} \bm s (\bu_n) \cdot \bu_n  
&\leq  
\int\limits_0^{t+\varepsilon} \eta  \langle (\bm f, \bm h) , \bu \rangle + \frac{1}{2} ||  \bu_0 ||_{H}^2  -\frac{1}{2\varepsilon} \int\limits_t^{t+\varepsilon} ||\bu ||_{H}^2  .
\end{align*}
Using the weak lower semicontinuity again we can pass with $\varepsilon \to 0_+$ to get
\begin{align*}
\limsup_{n \to + \infty} \int\limits_0^t \int\limits_{Q_n}  \mathbb{S}(D\bu_n) : D \bu_n  +  \alpha \int\limits_0^t \int\limits_{\Gamma_n} \bm s (\bu_n) \cdot \bu_n  
&\leq  
\int\limits_0^t \eta  \langle (\bm f, \bm h) , \bu \rangle + \frac{1}{2} ||  \bu_0 ||_{H}^2  - \frac{1}{2} ||\bu (t) ||_{H}^2  
\end{align*}
and thanks to \eqref{eq:equality2} we achieve
\begin{align}
\limsup_{m \to + \infty} \int\limits_0^t \int\limits_{Q_n} & \mathbb{S}(D\bu_n) : D \bu_n +  \alpha \int\limits_0^t \int\limits_{\Gamma_n} \bm s (\bu_n) \cdot \bu_n  
\leq  
\int\limits_0^t \int\limits_{\Omega} \overline{\mathbb{S}(D\bu)} : D \bu + \alpha \int\limits_0^t \int\limits_{\Gamma} \overline{\bm s (\bu)} \cdot \bu  . \label{eq:inequality3}
\end{align}

We are now in the position to apply Minty's trick. We consider $\mathbb{V} \in L^2 (0, T; L^2(\Omega))$ and $\bv \in L^2 (0, T; L^2(\Gamma))$ be arbitrary. Due to \eqref{eq:Coercivity}, \eqref{eq:CoercivityBoundary}, and zero exstension of $\bu_n$ into $\Omega$, there holds
\begin{align*}
0 & \leq 
\int\limits_0^t \int\limits_{\Omega} (\mathbb{S} (D \bu_n) -  \mathbb{S} (\mathbb{V}) ) : (D \bu_n - \mathbb{V}) +   \alpha \int\limits_0^t \int\limits_{\Gamma} (\bm s (\bu_n) - \bm s (\bv) )\cdot (\bu_n - \bv ) \\
& = \int\limits_0^t \int\limits_{Q_n} \mathbb{S}(D\bu_n) : D \bu_n  +  \alpha \int\limits_0^t \int\limits_{\Gamma_n} \bm s (\bu_n) \cdot \bu_n  
- \int\limits_0^t \int\limits_{\Omega} \mathbb{S}(D\bu_n) : \mathbb{V}  -  \alpha \int\limits_0^t \int\limits_{\Gamma} \bm s (\bu_n) \cdot \bv \\
& \quad 
-\int\limits_0^t \int\limits_{\Omega}  \mathbb{S} (\mathbb{V})  : (D \bu_n - \mathbb{V}) -   \alpha \int\limits_0^t \int\limits_{\Gamma} \bm s (\bv) \cdot (\bu_n - \bv ) .
\end{align*}
We pass with $n \to + \infty$, using \eqref{eq:inequality3} and coupling the terms together we find
\begin{align*}
0 & \leq 
\int\limits_0^t \int\limits_{\Omega} (\overline{\mathbb{S}(D\bu)}  -  \mathbb{S} (\mathbb{V}) ) : (D \bu - \mathbb{V}) +   \alpha \int\limits_0^t \int\limits_{\Gamma} (  \overline{\bm s (\bu)} - \bm s (\bv) )\cdot (\bu - \bv ) .
\end{align*}
As $\mathbb{V}$ and $\bv$ were chosen arbitrary, we now set $\mathbb{V} = D\bu - \varepsilon \mathbb{W}$ and $\bv = \bu - \varepsilon \bm w$ for again arbitrary $\mathbb{W}$, $\bm w$ and $\varepsilon > 0$. Letting $\varepsilon \to 0_+$ we get 
\begin{align*}
0 & \leq 
\int\limits_0^t \int\limits_{\Omega} (\overline{\mathbb{S}(D\bu)}  -  \mathbb{S} (\mathbb{V}) ) :  \mathbb{W} +   \alpha \int\limits_0^t \int\limits_{\Gamma} (  \overline{\bm s (\bu)} - \bm s (\bv) )\cdot \bm w .
\end{align*}
Finally, thanks to the arbitrariness of both $\mathbb{W}$ and $\bm w$, we see that $\overline{\mathbb{S}(D\bu)} = \mathbb{S}(D\bu)$ and $ \overline{\bm s (\bu)} =  \bm s (\bu)$ indeed hold. So, the existence part of the proof is now complete as \eqref{eq:WF} is satisfied.

\begin{remark}
We note that the identification of the limit of the boundary non-linearity could be carried out more directly since $W^{1,2} (\Omega)$ is compactly embedded into $L^2(\partial \Omega)$ for a bounded 2D domain $\Omega$.
\end{remark} 
 
\underline{Step V: uniqueness.} Uniqueness of $\bu$ is just an application of the standard procedure. Nevertherles, the estimates there will be useful, so we briefly present it here. Let us consider two weak solutions $\bu$, $\bv$ to \eqref{eq:EquationInside}-\eqref{eq:Initial} with the same right-hand sides. For the difference $\bm w = \bu - \bv$, we have
\begin{align*}
\frac{1}{2} \cdot \frac{{\rm d}}{{\rm d}t} || \bm w ||_H^2 + c_1 || \bm w ||_V^2
\leq \Big| \int\limits_{\Omega} [ (\bu \cdot \nabla ) \bu - (\bv \cdot \nabla ) \bv ] \bm w \Big| ,
\end{align*}
where we used \eqref{eq:Coercivity} and \eqref{eq:CoercivityBoundary}. Due to zero divergence, H{\"o}lder's inequality, \eqref{eq:Ladyzhenskaya}, and \eqref{eq:SecondKorn} we find
\begin{align*}
\Big| \int\limits_{\Omega} [ (\bu \cdot \nabla ) \bu - (\bv \cdot \nabla ) \bv ] \cdot \bm w \Big| 
&= \Big| \int\limits_{\Omega}  (\bm w\cdot \nabla ) \bv \cdot \bm w \Big|
\leq || \bm w ||_{L^4(\Omega)}^2 || \nabla \bv ||_{L^2(\Omega)} \\
&\leq C || \bm w ||_{L^2(\Omega)} || \bm w ||_V  || \nabla \bv ||_{L^2(\Omega)}  \\
&\leq \varepsilon || \bm w ||_V^2 + C || \bm w ||_{L^2(\Omega)}^2 || \bv ||_{V}^2  .
\end{align*}
Putting it together and using Gr{\"o}nwall's inequality (recall that $\bv \in L^2(0, T; V)$) we get
\begin{align}
|| \bm w (t) ||_H^2 \leq C  || \bm w (0) ||_H^2 . \label{eq:ControlOfDifference1}
\end{align}
From this fact, uniqueness follows instantly. Moreover, integrating relative energy inequality above we find
\begin{align}
\int\limits_0^t || \bm w ||_V^2 \leq C  || \bm w (0) ||_H^2 . \label{eq:ControlOfDifference2}
\end{align}

\underline{Step VI: higher regularity.} Regarding the second part of the statement of \Cref{thm:ExistenceInfinitePlates}, we show the global version, i.e. for better initial condition. In details, for bounded domains, it can be found in Theorem 12 in \cite{PrZe24}; we mention that it comes down to Theorem III.3.5 in \cite{Te79}. 

To do so, we go back to \eqref{eq:Galerkin} and differentiate it in time. Multiply the result by $(c_{n, k}^m(t))'$, sum over $k$'s and after integration over time interval $(0, t)$ we obtain
\begin{align*}
\frac{1}{2}|| \partial_t \bu_n^m(t) ||_{H(Q_n)}^2 + c_3 \int\limits_0^t || \partial_t \bu_n^m||_{V(Q_n)}^2   & \leq \frac{1}{2}|| \partial_t \bu_n^m(0) ||_{H(Q_n)}^2 + \Big|  \int\limits_0^t \langle (\partial_t \bm f, \partial_t \bm h), \partial_t \bu_n^m\rangle_{V(Q_n)} \Big|  \\
& \quad +  \Big|  \int\limits_0^t \int\limits_{Q_n} [ ( \partial_t \bu_n^m \cdot \nabla)\bu_n^m + (\bu_n^m \cdot \nabla) (\partial_t \bu_n^m)  ] \cdot \partial_t \bu_n^m \Big|  ,
\end{align*}
where \eqref{eq:CoercivityDerivative} and \eqref{eq:CoercivityDerivativeBoundary} were used. Because $\bu_0$ is now more regular, it is possible to show that $\partial_t \bu_n^m(0) \in H$. The integral of duality can be splitted in a standard way using H{\"o}lder's and Young's inequalities. Finally, second part of the last integral vanishes as usual, and regarding the first one we have
\begin{align*}
\Big| \int\limits_0^t \int\limits_{Q_n} (\partial_t \bu_n^m \cdot \nabla)\bu_n^m  \cdot \partial_t \bu_n^m \Big| \leq \varepsilon || \partial_t \bu_n^m  ||_{V(Q_n)}^2 + C || \partial_t \bu_n^m  ||_{H(Q_n)}^2 || \bu_n^m  ||_{V(Q_n)}^2 ,
\end{align*}
just like in the estimation in Step V. Hence, \eqref{eq:UniformEstimates} holds also for $\partial_t \bu_n^m$ instead of $\bu_n^m$ and we have that 
\begin{align*}
|| \bu_n^m ||_{L^2(0, T; V(Q_n))} + || \partial_t \bu_n^m ||_{L^2(0, T; V(Q_n))} \leq C .
\end{align*}
Therefore, $|| \bu_n^m ||_{L^{\infty} (0, T; V(Q_n))} \leq C$ and $\bu_n$ are thus uniformly bounded in $L^{\infty} (0, T; V(Q_n))$. By extension it holds in the whole $\Omega$, and hence, the limit function satisfies $\bu \in L^{\infty} (0, T; V)$. Finally, higher regularity of $\partial_t \bu$ is clear as we have uniform estimate in $m$ and $\partial_t \bu_n$ are naturally extended by zero, which means the uniform estimates in $n$ also.
\end{proof}

The proof of the second main theorem would ideally follow using Theorem 1 from \cite{PrZe24} in $Q_n$ (which would be actually slightly modified to have $Q_n \in \mathcal{C}^{1,1}$). Nevertheless, it comes down to results of \cite{AACG21} and we do not know if constants therein do not explode with increasing $n$. Moreover, the proof from \cite{PrZe24} would still need to be modified as there were used embeddings that do not hold in unbounded domains. We thus have to do that from scratch, but the procedure remains the same. For just constructed unique weak solution in $\Omega$ we use the bootstrap argument relying on the following result. 

\begin{proposition}\label{thm:StationaryRegularity}
Let $\alpha > 0$, $p \in (1, +\infty )$ and consider
\begin{align*}
- \Delta \bu + \nabla \pi &=  \bm f \quad \text{ in }  \Omega  ,\\
 \alpha \bu + [(2D\bu) \bm n]_\tau &= \bm h \quad  \text{ on } \Gamma ,\\
\text{div}\, \bu  &= 0  \quad  \text{ in } \Omega ,\\
 \bu &= 0 \quad  \text{ on } \Gamma_1 , \\
 \bu \cdot \bm n &= 0 \quad  \text{ on } \Gamma . 
\end{align*}
If $\bm f \in L^{ t(p)} (\Omega),\, \bm h \in W^{-\frac{1}{p}, p}(\Gamma) \, \text{with } t(p) = \frac{ 2p }{p+2}$, then the unique solution $(\bu, \pi )$ of this system belongs to $W^{1, p}(\Omega) \times L^p(\Omega)$ and satisfies
\begin{align}
|| \bu ||_{W^{1,p}(\Omega)} + || \pi ||_{L^p(\Omega)} \leq C(\Omega, p, \alpha) \Big( || \bm f ||_{L^{ t(p)} (\Omega)} + || \bm h ||_{W^{-\frac{1}{p}, p}(\Gamma)} \Big) . \label{eq:StationaryRegularityFirst}
\end{align}
Also, if $\bm f \in L^p(\Omega),\, \bm h \in W^{1-\frac{1}{p}, p}(\Gamma)$, then this solution belongs to $W^{2, p}(\Omega) \times W^{1, p}(\Omega)$ and satisfies
\begin{align}
|| \bu ||_{W^{2,p}(\Omega)} + || \pi ||_{W^{1,p}(\Omega)} \leq C(\Omega, p, \alpha) \Big( || \bm f ||_{L^p(\Omega)} + || \bm h ||_{W^{1-\frac{1}{p}, p}(\Gamma)} \Big) . \label{eq:StationaryRegularitySecond}
\end{align}
\end{proposition}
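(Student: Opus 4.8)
The plan is to first settle existence and uniqueness of a weak solution and then promote its integrability by the standard elliptic (Agmon--Douglis--Nirenberg/Cattabriga) machinery, adapted to the flat channel and to the mixed Dirichlet/Navier data. For uniqueness and the base energy bound I would test the weak formulation of the stationary system with $\bu$ itself: after integrating $-\operatorname{div}(2D\bu)=-\Delta\bu$ by parts, the viscous part yields $2\|D\bu\|_{L^2(\Omega)}^2$ and the boundary relation $\alpha\bu+[(2D\bu)\bm n]_\tau=\bm h$ yields $\alpha\|\bu\|_{L^2(\Gamma)}^2$, a quantity dominating $\|\bu\|_V^2$. Coercivity is then immediate from the Korn-type inequalities \eqref{eq:FirstKorn} and \eqref{eq:SecondKorn} of \Cref{thm:BadKorn} and \Cref{thm:Korn}, so Lax--Milgram (or the Galerkin scheme of the proof of \Cref{thm:ExistenceInfinitePlates}) produces a unique $\bu\in V$ together with a pressure $\pi$ recovered via De Rham on each bounded sub-domain. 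This fixes the solution whose higher integrability is to be established.

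For the regularity itself I would localize by a partition of unity subordinate to a covering of $\overline{\Omega}$ by balls of fixed radius. On interior patches the system is the usual Stokes system and interior $L^p$-theory is classical; on patches meeting the upper plate $\Gamma_1$ the boundary is flat and the condition is homogeneous Dirichlet, so half-space Stokes estimates apply verbatim. The genuinely new patches are those meeting $\Gamma$, where one has $\bu\cdot\bm n=0$ together with the Navier-type relation; since $\Gamma$ is also flat, no change of variables is needed and one may invoke (or reprove by the even/odd reflection used in \Cref{thm:TechnicalEstimates}) the half-space $L^p$-theory for the Stokes operator under impermeability and slip, which is exactly the ingredient underlying \cite{AACG21}. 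In each case localization generates lower-order commutators absorbed by interpolation, and the pressure is controlled through $\Delta\pi=\operatorname{div}\bm f$ together with a Bogovskii-type correction preserving the divergence constraint on each patch. Running this at the level ``data in $L^{t(p)}$, $W^{-1/p,p}$, solution in $W^{1,p}$'' gives \eqref{eq:StationaryRegularityFirst}, and a second pass with one order more regular data gives \eqref{eq:StationaryRegularitySecond}; the choice $t(p)=\tfrac{2p}{p+2}$ is dictated by the two-dimensional embedding, which makes $L^{t(p)}(\Omega)$ act as a $W^{-1,p}(\Omega)$ datum, the natural space for $W^{1,p}$ solutions.

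The feature distinguishing this from the bounded case of \cite{AACG21} is the non-compactness of $\Omega$ in the $x$-direction, and that is where I would concentrate the care. The saving grace is translation invariance: the covering may be chosen invariant under integer shifts in $x$, and since neither the operator nor the geometry of the two plates depends on $x$, the local constants from the half-space estimates are uniform in the patch location. Summing the $p$-th powers of the local bounds over the locally finite covering and using $\ell^p$-superadditivity of localized $L^p$-norms then yields a genuinely global estimate with a single constant $C(\Omega,p,\alpha)$. An equivalent route I would keep in reserve is the partial Fourier transform in $x$, which reduces the problem to a $\xi$-parametrized two-point boundary-value problem on $(0,1)$ solvable explicitly, with uniformity in $x$ automatic but at the cost of operator-valued Fourier-multiplier ($\mathcal R$-boundedness) machinery.

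The main obstacle is concentrated in the analysis at $\Gamma$ and is twofold: first, the $L^p$ half-space theory for the Stokes system under the mixed impermeability/slip conditions, with its attendant pressure and divergence constraints, is more delicate than the Dirichlet theory; and second, one must verify that \emph{every} constant entering the localization-and-summation step is independent of the shift in $x$, so that the passage from local to global estimates does not degrade. Once these are in hand, assembling \eqref{eq:StationaryRegularityFirst} and \eqref{eq:StationaryRegularitySecond} is routine.
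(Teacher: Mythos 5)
The paper does not actually prove this proposition: its entire ``proof'' is the single sentence that the statement is a simple modification of Theorem~2.2 in \cite{PrZe25}, the author's companion paper that develops the stationary $L^p$-theory for the Stokes system with Navier-type slip in the unbounded channel (itself the unbounded analogue of \cite{AACG21}). Your proposal is therefore a genuinely different route in the sense that you attempt to reconstruct that external result from scratch, and the outline you give --- energy solution via Korn/Lax--Milgram, localization by a translation-invariant partition of unity, interior Cattabriga estimates, flat half-space estimates with Dirichlet data at $\Gamma_1$ and with impermeability-plus-slip data at $\Gamma$, uniformity of constants by translation invariance in $x$, and the partial Fourier transform in $x$ held in reserve --- is indeed the standard architecture for such a theorem and is consistent with what the cited reference does. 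What the paper's approach buys is brevity and a clean separation of concerns; what yours buys is transparency about exactly where the unboundedness and the slip condition enter.

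Two soft spots in your sketch deserve naming. First, the existence step: Lax--Milgram lives in the Hilbert setting, so for data merely in $L^{t(p)}(\Omega)\times W^{-1/p,p}(\Gamma)$ with $p$ far from $2$ the right-hand side need not define an element of $V^*$, and ``the unique solution'' has to be constructed either by first treating $p\ge 2$ and dualizing, or directly from the multiplier/half-space representation; this should be said. Second, and more seriously, the localization-and-summation step produces a global bound of the form $\|\bu\|_{W^{1,p}(\Omega)}\le C(\|\text{data}\|+\|\bu\|_{L^p(\Omega)})$, and on the unbounded channel the lower-order term cannot be absorbed by interpolation alone (any Gagliardo--Nirenberg or Poincar\'e chain one writes down is circular at constant larger than one), nor does the energy estimate control $\|\bu\|_{L^p}$ for $p\neq 2$. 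This is precisely the point where the flat-channel structure must be exploited quantitatively --- e.g.\ via the partial Fourier transform in $x$ you mention only as a fallback, or via a duality argument for the first estimate --- rather than treated as a routine commutator. Once \eqref{eq:StationaryRegularityFirst} is in hand the second pass to \eqref{eq:StationaryRegularitySecond} absorbs its lower-order terms against the already-established first-order bound, exactly as you structure it.
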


\begin{proof}
It is a simple modification of Theorem 2.2 in \cite{PrZe25}. 
\end{proof}

This result shows that the boundary term $\beta \partial_t \bu$ in \eqref{eq:EquationBoundary} makes regularity substantially worse as $\bm h$ from \Cref{thm:StationaryRegularity} will include it during the bootstrap argument. In order to achieve $\bu \in L_{\text{loc}}^{\infty} (0, T; W^{2,p}(\Omega))$ we will thus need to show $\partial_t \bu \in L_{\text{loc}}^{\infty} (0, T; W^{1,p}(\Omega))$, because then $\text{tr}\, \partial_t \bu \in L_{\text{loc}}^{\infty} (0, T; W^{1-\frac{1}{p},p}(\Gamma))$. 

\begin{proof}[Proof of \Cref{thm:MaximalRegularity}]
We consider $\mathbb{S}(D\bu) = 2D\bu$ and $2 \leq p \leq 4$. We focus here only on the main points, see the section 3.2 in \cite{PrZe24} for details. From (local version of) \Cref{thm:ExistenceInfinitePlates} we have unique $\bu \in  L_{\text{loc}}^{\infty} (0, T; V)$, weak solution to \eqref{eq:EquationInside}-\eqref{eq:Initial}, with the time derivative in $L_{\text{loc}}^{\infty} (0, T; H) \cap L_{\text{loc}}^2 (0, T; V)$. The assertion of the theorem follows from \eqref{eq:StationaryRegularitySecond} if we show
\begin{align*}
\bm f - (\bu \cdot \nabla) \bu - \partial_t \bu &\in L_{\text{loc}}^{\infty} (0, T; L^p(\Omega)) ,\\
\beta \bm h + \alpha (\bu - \bm s (\bu))  - \beta \text{tr}\, \partial_t \bu &\in L_{\text{loc}}^{\infty} (0, T; W^{1-\frac{1}{p},p}(\Gamma))  .
\end{align*}

Due to $\bu \in  L_{\text{loc}}^{\infty} (0, T; V)$ we find $(\bu \cdot \nabla) \bu \in L_{\text{loc}}^{\infty} (0, T; L^{\frac{4}{3}} (\Omega) ) $. Observe that $W^{-\frac{1}{2}, 2} (\Omega) \hookrightarrow  L^{\frac{4}{3}} (\Omega)$ by Sobolev inequality. Using duality argument, it can be shown that the time derivative belongs to $L_{\text{loc}}^{\infty} (0, T; V^*)$ and as it is also in $ L_{\text{loc}}^{\infty} (0, T; L^2(\Omega))$, we see that $ \partial_t \bu \in L_{\text{loc}}^{\infty} (0, T; L^{\frac{4}{3}} (\Omega))$. Next, because of $\partial_t \bu \in L_{\text{loc}}^2 (0, T; V)$ we know that $\text{tr}\, \partial_t \bu$ makes sense and due to $\partial_t \bu \in L_{\text{loc}}^{\infty} (0, T; H)$ we have $\text{tr}\, \partial_t \bu \in L_{\text{loc}}^{\infty} (0, T; L^2(\Gamma))$. Thanks to Sobolev embedding we get $\text{tr}\, \partial_t \bu \in L_{\text{loc}}^{\infty} (0, T; W^{-\frac{1}{4}, 4}(\Gamma))$. Hence, $(\partial_t \bu, \text{tr}\, \partial_t \bu) \in L_{\text{loc}}^{\infty} (0, T; L^{\frac{4}{3}} (\Omega) \times W^{-\frac{1}{4}, 4}(\Gamma) ) $. Thanks to \eqref{eq:StationaryRegularityFirst} we now achieve $\bu \in L_{\text{loc}}^{\infty}(0, T; W^{1,4} (\Omega))$, in particular $\bu$ is bounded in $(0, T) \times \overline{\Omega}$. Using this information we can further improve the convective term, we get $(\bu \cdot \nabla) \bu \in L_{\text{loc}}^{\infty} (0, T; L^4 (\Omega) ) $. So, we already have the sufficient regularity of the convective term and our data $\bm f$, $\bm h$. Note that as $\partial_t \bu \in L_{\text{loc}}^2 (0, T; V)$ we can use \eqref{eq:StationaryRegularitySecond} to find $\bu \in L_{\text{loc}}^2 (0, T; W^{2,2}(\Omega))$.

It remains to improve the time derivative. Let us note that at this point we know that $\partial_t \bu$  lies just in $L_{\text{loc}}^{\tilde{q}} (0, T; L^{\tilde{p}}(\Omega))$ for some $2 < \tilde{q}$ and $2 < \tilde{p} < 4$. It can be shown that $\bv = \partial_t \bu$ satisfies the system
\begin{align}
\begin{split}
\partial_t \bv - \Delta \bv  +  \nabla \sigma &=  -(\bv \cdot \nabla)\bu  -(\bu \cdot \nabla)\bv  \text{ in } (0, T) \times \Omega ,   \\
\beta \partial_t \bv +  \alpha \bv + [(2D\bv) \bm n ]_\tau   &= \alpha ( 1- \nabla \bm s (\bu)) \bv  \text{ on } (0, T) \times \Gamma , \label{eq:SystemDerivative}
\end{split}
\end{align} 
together with divergence-free condition, impermeability and zero Dirichlet condition on $\Gamma_1$. 
\begin{remark}
As the right-hand side of \eqref{eq:SystemDerivative} belongs to $L_{\text{loc}}^2(0, T; H)$ we could try to use the approach from the end of the proof of Theorem 13 in \cite{PrZe24}. It is basically testing by $-\Delta \bv_n$ on the Galerkin level. However, one needs $L^2(Q_n) \hookrightarrow L^{\frac{4}{3}}(Q_n)$ which is not uniform in $n$ and it would not lead to $L_{\text{loc}}^{\infty}$-regularity in time anyway.

\end{remark}
The solution of system \eqref{eq:SystemDerivative} is unique and was obtained as the time derivative of the limit of $\bu_n$ in $Q_n$ and these are limits of the Galerkin approximations $\bu_n^m$. Note that $\partial_t \bv \in L_{\text{loc}}^2(0, T; V^*)$, which follows from the second part of \Cref{thm:ExistenceInfinitePlates}. We use the same strategy as in Step V from the proof of \Cref{thm:ExistenceInfinitePlates}, i.e. we differentiate the Galerkin approximation with respect to time and test by $\partial_t \bv_n^m$; here we use $\bm s \in \mathcal{C}^2$. All estimates are uniform and we find that $\partial_t \bv \in L_{\text{loc}}^{\infty} (0, T; H) \cap L_{\text{loc}}^2 (0, T; V)$. Note that thanks to $\bv \in L_{\text{loc}}^2 (0, T; V)$, we have $\bv = \partial_t \bu \in L_{\text{loc}}^{\infty} (0, T; V)$, so for $p = 2$ we already achieve $\bu \in L_{\text{loc}}^{\infty} (0, T; W^{2,2}(\Omega))$ using \eqref{eq:StationaryRegularitySecond}. For $2 < p \leq 4$, we proceed just like before and find $(\partial_t \bv, \text{tr}\, \partial_t \bv) \in L_{\text{loc}}^{\infty} (0, T; L^{\frac{4}{3}} (\Omega) \times W^{-\frac{1}{4}, 4}(\Gamma) ) $. Now, due to \eqref{eq:StationaryRegularityFirst} we find $\bv = \partial_t \bu \in L_{\text{loc}}^{\infty}(0, T; W^{1,4} (\Omega))$. As $\partial_t \bu \in L_{\text{loc}}^{\infty} (0, T; V)$, we get $\partial_t \bu \in L_{\text{loc}}^{\infty} (0, T; W^{1,p}(\Omega))$, which is the desired regularity of the time derivative and the proof is complete.

\end{proof}

\section{Attractor dimension}
This section purely focuses on the Navier-Stokes system, i.e. $\mathbb{S} (\mathbb{D}) = 2\nu \mathbb{D}$ for some $\nu > 0$. We suppose that \eqref{eq:CoercivityBoundary}-\eqref{eq:CoercivityDerivativeBoundary} hold (all constants there are thus equal to $2\nu$) and let $\nabla \bm s$ be Lipschitz. In the spirit of \Cref{thm:NonDimensionalization}, we can assume $\nu = 1$ and $L = 1$. Due to the first part of \Cref{thm:ExistenceInfinitePlates} it is possible to introduce a continuous semigroup $\{ \mathcal{S}(t) \}_{t \geq 0} : H \to H$ by 
\begin{align*}
\mathcal{S}(t) \bu_0 = \bu (t) \text{ for } t \geq 0 ,
\end{align*}
where $\bu_0 \in H$ and $\bu$ is the unique weak solution of \eqref{eq:EquationInside}-\eqref{eq:Initial}. Now, for time-independent $(\bm f, \bm h) \in H$, using \eqref{eq:CoercivityBoundary} relation \eqref{eq:EnergyEquality} reads
\begin{align}
\frac{1}{2} \cdot \frac{{\rm d}}{{\rm d}t} || \bu ||_H^2  + 2|| \bu ||_V^2  &= \langle  (\bm f, \bm h), \bu \rangle = ((\bm f, \bm h), \bu)_H   \label{eq:EnergyEqualitySpecific}
\end{align}
and estimate \eqref{eq:EquivalenceNorms} gives us (recall that $\Lambda =  \frac{32}{\pi^2} + \beta \min \big\{  \frac{1}{\alpha},  8 \big\}$)
\begin{align*}
\frac{{\rm d}}{{\rm d}t} || \bu ||_H^2   &\leq 2  ||\bu ||_H  \Big( || (\bm f, \bm h)||_H - \frac{2} {\Lambda} ||\bu||_H \Big) . 
\end{align*}
Hence the ball $B (0, R) \subset H $, with $R < \frac{\Lambda}{2}  || (\bm f, \bm h)||_H$, is uniformly absorbing. Further, it can be shown that  $\{ \mathcal{S}(t) \}_{t \geq 0}$ is asymptotically compact in $H$, i.e.  $\{ \mathcal{S}(t_n) \bu_n \}_{n \in \mathbb{N}}$ is precompact for any bounded $\{ \bu_n \}_{n \in \mathbb{N}}$ and $t_n \to + \infty$. We will not present the proof here, it relies on energy equality \eqref{eq:EnergyEquality} and can be done just like in section 3 in \cite{Rosa}. Therefore, solution semigroup $\{ \mathcal{S}(t) \}_{t \geq 0}$ possess a (unique) global attractor $\mathcal{A}$; see Theorem 3.1 therein. It means that $\mathcal{A} \subset H$ is a compact set, which is invariant with respect to $\mathcal{S}(t)$, i.e. $\mathcal{S}(t) \mathcal{A} = \mathcal{A} $ for all $t \geq 0$, and for any bounded $B \subset H$ there holds $\text{dist}\, (\mathcal{S}(t)B, \mathcal{A} ) \to 0$ as $t \to +\infty$, where we consider Hausdorff semi-distance, i.e. $\text{dist}\, (\mathcal{S}(t)B, \mathcal{A} ) = \sup_{a \in \mathcal{A}} \inf_{b \in \mathcal{S}(t)B} || b - a ||_H$. See \cite{Robinson11}, \cite{Robinson01} or \cite{Te97} for more details.

Our current goal is to estimate the fractal dimension of $\mathcal{A}$, i.e. to find an upper bound of
\begin{align*}
\dim_H^f  \mathcal{A} = \limsup\limits_{\varepsilon \to 0_+} \frac{\log N_{\varepsilon} \mathcal{A}}{- \log \varepsilon} \, ,
\end{align*}
where $N_{\varepsilon} \mathcal{A}$ denotes the minimal number of $\varepsilon$-balls (in $H$) needed to cover $\mathcal{A}$. To this aim we consider a formal linearization of our system \eqref{eq:EquationInside}-\eqref{eq:Initial}, i.e. 
\begin{align}
\partial_t \bU - \Delta \bU + (\bU \cdot \nabla ) \bu + (\bu \cdot \nabla ) \bU + \nabla \sigma &=    0 \text{ in } (0, T) \times \Omega , \label{eq:LinearInside}  \\
\beta \partial_t \bU + \alpha \nabla \bm s (\bu) \bU + [(2D\bU) \bm n ]_\tau   &=  0 \text{ on } (0, T) \times \Gamma ,  \label{eq:LinearBoundary}
\end{align}
together with
\begin{align}
\text{div}\, \bU &= 0 \text{ in } (0, T) \times \Omega ,  \\
\bU &= 0  \text{ on } (0, T) \times \Gamma_0 ,  \\
\bU \cdot \bm n &= 0 \text{ on } (0, T) \times ( \Gamma \cup \Gamma_0 ),  \\
\bU(0, \cdot) &= \bv_0 (\cdot) - \bu_0 (\cdot) \text{ in } \overline{ \Omega } .   \label{eq:LinearInitial}
\end{align}
Based on the previous arguments, it is simple to observe that it has a unique weak solution. It is useful to write these equations as 
\begin{align} 
\partial_t \bm U  = \mathcal{L}(t,\bu_0) \bm U ,  \label{eq:lin-rce}
\end{align}
where the solution operator $\mathcal{L}(t,\bu_0)$ depends on the solution $\bu=\bu(t)$ with $\bu(0) = \bu_0 \in \mathcal{A}$. We can now prove the following statement about relation between $\mathcal{S}(t)$ and $\mathcal{L}(t,\bu_0)$. It is the same assertion as Theorem 15 in \cite{PrZe24}, however, the proof there depends on $L^{\infty}(W^{2,p})$-regularity as in \Cref{thm:MaximalRegularity} and, as the following proof shows, it is not needed in the case of linear $\mathbb{S}$.

\begin{lemma}\label{thm:Differentiability}
The solution operator $\mathcal{L}(t,\bu_0)$ of \eqref{eq:LinearInside}--\eqref{eq:LinearInitial} is a uniform quasidifferential to $\mathcal{S}(t)$ on $\mathcal{A}$, i.e. for fixed $t \geq 0$ and any $\bu_0 \in \mathcal{A}$ there holds
\begin{align}
|| \mathcal{S}(t) \bv_0 - \mathcal{S}(t) \bu_0 - \bU(t) ||_H = o ( ||\bv_0 - \bu_0 ||_H ), \quad ||\bv_0 - \bu_0 ||_H \rightarrow 0 , \label{eq:QuasiDiff} 
\end{align}
where $\bv_0 \in \mathcal{A}$ and $\bU$ solves \eqref{eq:LinearInside}--\eqref{eq:LinearInitial}.
\end{lemma}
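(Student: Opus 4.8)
The plan is to estimate directly the quasidifferentiability remainder
\[
\bm w := \mathcal{S}(t)\bv_0 - \mathcal{S}(t)\bu_0 - \bU(t) = \bv - \bu - \bU ,
\]
where $\bu = \mathcal{S}(\cdot)\bu_0$ and $\bv = \mathcal{S}(\cdot)\bv_0$ are the two genuine solutions and $\bU$ solves \eqref{eq:LinearInside}--\eqref{eq:LinearInitial}. Writing $\bm z := \bv - \bu$ for the difference of the two nonlinear solutions, one has $\bm w(0) = (\bv_0-\bu_0) - (\bv_0-\bu_0) = 0$. First I would subtract the three systems. A short computation of the convective terms (substituting $\bv = \bu + \bm z$, $\bU = \bm z - \bm w$) shows that $\bm w$ solves, weakly, the interior equation $\partial_t\bm w - \Delta\bm w + (\bu\cdot\nabla)\bm w + (\bm w\cdot\nabla)\bu + \nabla\tilde\pi = -(\bm z\cdot\nabla)\bm z$ together with the boundary condition $\beta\partial_t\bm w + \alpha\nabla\bm s(\bu)\bm w + [(2D\bm w)\bm n]_\tau = -\alpha R$, where $R := \bm s(\bv) - \bm s(\bu) - \nabla\bm s(\bu)\bm z$ is the first-order Taylor remainder; since $\nabla\bm s$ is Lipschitz we have the pointwise bound $|R| \le C|\bm z|^2$. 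The structural key is that \emph{both} forcing terms are quadratic in $\bm z$.

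Next I would test this system with $\bm w$ itself, which is legitimate because $\bm w \in L^2(0,T;V)$ and $\partial_t\bm w \in L^2(0,T;V^*)$, so $\langle\partial_t\bm w,\bm w\rangle = \tfrac12\tfrac{{\rm d}}{{\rm d}t}\|\bm w\|_H^2$ by the Gelfand-triple structure. The term $\int_\Omega(\bu\cdot\nabla)\bm w\cdot\bm w$ vanishes by incompressibility, while the diffusion together with the boundary coercivity \eqref{eq:CoercivityDerivativeBoundary} combine into a coercive contribution $\gtrsim \|\bm w\|_V^2$. For the linear term I would use $|\int_\Omega(\bm w\cdot\nabla)\bu\cdot\bm w| \le \|\bm w\|_{L^4(\Omega)}^2\|\nabla\bu\|_{L^2(\Omega)}$, then \eqref{eq:Ladyzhenskaya} and Young to reach $\varepsilon\|\bm w\|_V^2 + C\|\bu\|_V^2\|\bm w\|_{L^2(\Omega)}^2$; the interior quadratic term, after one integration by parts, is bounded via \eqref{eq:Ladyzhenskaya} and \eqref{eq:FirstKorn} by $\varepsilon\|\bm w\|_V^2 + C\|\bm z\|_{L^2(\Omega)}^2\|\bm z\|_V^2$. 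Absorbing all the $\varepsilon\|\bm w\|_V^2$ into the coercive term yields a differential inequality of the form $\tfrac{{\rm d}}{{\rm d}t}\|\bm w\|_H^2 + c\|\bm w\|_V^2 \le C\|\bu\|_V^2\,\|\bm w\|_H^2 + C g(t)$, where $g$ collects the quadratic forcing.

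The point is then that $\int_0^t g \le C\|\bm z(0)\|_H^4 = C\|\bv_0-\bu_0\|_H^4$: this uses \eqref{eq:ControlOfDifference1}, which bounds $\sup_s\|\bm z(s)\|_{L^2(\Omega)}^2$ and $\sup_s\|\bm z(s)\|_{L^2(\Gamma)}^2$ by $C\|\bm z(0)\|_H^2$, together with \eqref{eq:ControlOfDifference2}, which bounds $\int_0^t\|\bm z\|_V^2$ by $C\|\bm z(0)\|_H^2$. Since $\bm w(0)=0$, Gr\"onwall's inequality gives $\|\bm w(t)\|_H^2 \le C(t)\|\bv_0-\bu_0\|_H^4$, that is $\|\bm w(t)\|_H = O(\|\bv_0-\bu_0\|_H^2) = o(\|\bv_0-\bu_0\|_H)$, which is precisely \eqref{eq:QuasiDiff}. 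Uniformity over $\mathcal{A}$ follows because the constants depend only on $t$ and on $\sup_{\bu_0\in\mathcal{A}}\sup_{s\ge 0}\|\bu(s)\|_V$, finite as $\mathcal{A}$ is invariant and bounded in $V$.

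I expect the main obstacle to be the boundary remainder $\alpha\int_\Gamma R\cdot\bm w$. A naive estimate $\|R\| \lesssim \|\bm z\|_V^2$ would force the non-integrable fourth power $\int_0^t\|\bm z\|_V^4$, which is not controlled. The remedy is to retain one low-order factor: by H\"older $\int_\Gamma|\bm z|^2|\bm w| \le \|\bm z\|_{L^4(\Gamma)}^2\|\bm w\|_{L^2(\Gamma)}$, then the trace Korn estimate \eqref{eq:SecondKornWorse} for $\|\bm w\|_{L^2(\Gamma)}$ and a one-dimensional boundary interpolation $\|\bm z\|_{L^4(\Gamma)}^2 \le C\|\bm z\|_{L^2(\Gamma)}\|\bm z\|_V$ give $\varepsilon\|\bm w\|_V^2 + C\|\bm z\|_{L^2(\Gamma)}^2\|\bm z\|_V^2$, whose time integral is again $O(\|\bv_0-\bu_0\|_H^4)$ exactly because $\|\bm z\|_{L^2(\Gamma)}$ is controlled in $L^\infty$ in time by \eqref{eq:ControlOfDifference1}. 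Making this trace interpolation rigorous in the unbounded strip and securing the uniform $V$-bound of $\bu$ on $\mathcal{A}$ are the two points requiring the most care.
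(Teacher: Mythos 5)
Your proposal is correct and follows essentially the same route as the paper: you subtract the linearized system from the difference of the two nonlinear solutions, test with the remainder, absorb the coercive terms, reduce everything to quartic quantities in $\bm z = \bv - \bu$, and control these via \eqref{eq:Ladyzhenskaya}, \eqref{eq:ControlOfDifference1}, \eqref{eq:ControlOfDifference2} and Gr\"onwall to get the quadratic (hence $o$) bound. The only substantive divergence is the quartic boundary term, where your interpolation $\|\bm z\|_{L^4(\Gamma)}^2 \leq C \|\bm z\|_{L^2(\Gamma)} \|\bm z\|_{V}$ (valid by one-dimensional Gagliardo--Nirenberg through $H^{1/2}(\Gamma)$ and the trace theorem) is actually a slightly safer route than the paper's interpolation through $L^{\infty}(\Gamma)$, since $W^{1,2}(\Omega) \hookrightarrow L^{\infty}(\Gamma)$ is the critical, borderline case; both arguments land on the same bound $C\|\bv_0-\bu_0\|_{H}^4$.
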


\begin{proof}
We consider $t > 0$, $\bu_0$, $\bv_0 \in \mathcal{A}$, and denote $\bu  = \mathcal{S}(t) \bu_0 $, $\bv  = \mathcal{S}(t) \bv_0 $. We start with substracting the equations for $\bm w = \bv - \bu $ and $\bU$ to obtain that 
\begin{align*}
\partial_t (\bm w - \bU) - \Delta (\bm w - \bU) +(\bv \cdot \nabla ) \bv - (\bu \cdot \nabla ) \bu - (\bU \cdot \nabla )  \bu - (\bu \cdot \nabla ) \bU + \nabla \pi - \nabla \sigma = 0 .
\end{align*}
Next, we test it by $\bm w - \bU$, which yields 
\begin{align}
\frac{1}{2} \cdot \frac{{\rm d}}{{\rm d}t} || \bm w - \bU||_H^2  + \int\limits_{\Omega} 2|D (\bm w - \bU)|^2 +  \alpha I = J, \label{eq:Rovnost}
\end{align}
where
\begin{align*}
I &= \int\limits_{\Gamma} [\bm s (\bv) - \bm s (\bu) - \nabla \bm s (\bu) \bU] \cdot (\bm w - \bU) , \\
J & = -\int\limits_{\Omega}  \left[ (\bv \cdot \nabla ) \bv -  (\bu \cdot \nabla ) \bu - (\bU \cdot \nabla ) \bu - ( \bu \cdot \nabla ) \bU  \right] \cdot (\bm w - \bU) .
\end{align*}

The first integral is rewritten using the mean value theorem (for some $\theta \in [0,1]$) as follows
\begin{align*}
I 
&= \int\limits_{\Gamma} [\nabla \bm s (\bu + \theta \bm w)\bm w  - \nabla \bm s (\bu) \bU \pm \nabla \bm s (\bu) \bm w] \cdot (\bm w - \bU) \\
&= \int\limits_{\Gamma} [\nabla \bm s (\bu + \theta \bm w)\bm w  - \nabla \bm s (\bu) \bm w] \cdot (\bm w - \bU) 
+ \int\limits_{\Gamma} \nabla \bm s (\bu) ( \bm w - \bU)  \cdot (\bm w - \bU) \\
&= I_1 + I_2.
\end{align*}
To estimate $I_1$ we recall that $\nabla \bm s$ is Lipschitz and for $I_2$ we invoke \eqref{eq:CoercivityDerivativeBoundary} to find
\begin{align*}
I_1 
& \leq C \int\limits_{\Gamma} |\bm w|^2 |\bm w - \bU|
\leq \varepsilon  \int\limits_{\Gamma}  |\bm w - \bU|^2 + C (\varepsilon) \int\limits_{\Gamma} |\bm w|^4 , \\
I_2 
& \geq 2 \int\limits_{\Gamma} |\bm w - \bU|^2 . 
\end{align*}

Next, the integral $J$ can be (see e.g. proof of Theorem 15 in \cite{PrZe24}) rewritten in the following form
\begin{align*}
J 
=  -\int\limits_{\Omega}    (\bm w \cdot \nabla ) \bm w  \cdot (\bm w - \bU) - \int\limits_{\Omega} [ (\bm w -\bU) \cdot \nabla ] \bu   \cdot (\bm w - \bU) .
\end{align*}
Now, in the first integral, we use integration by parts; Young's inequality in both integrals then gives us that
\begin{align*}
J &\leq \int\limits_{\Omega}  |\bm w|^2 | \nabla  (\bm w - \bU)| + \int\limits_{\Omega} |\bm w -\bU|^2 |\nabla \bu| \\
&\leq \frac{\varepsilon}{8+\frac{32L^2}{\pi^2}} \int\limits_{\Omega}  | \nabla  (\bm w - \bU)|^2 + C(\varepsilon) || \bm w ||_{L^4{({\Omega})}}^4  +  ||\bm w -\bU||_{L^4({\Omega})}^2 ||\nabla \bu||_{L^2({\Omega})}  \\
&\leq \varepsilon ||\bm w -\bU||_V^2 + C(\varepsilon) || \bm w ||_{L^4{({\Omega})}}^4  +  c  ||\bm w -\bU||_{L^2({\Omega})} ||\bm w -\bU||_V  ||\nabla \bu||_{L^2({\Omega})}  \\
&\leq 2\varepsilon ||\bm w -\bU||_V^2 + C(\varepsilon) || \bm w ||_{L^4{({\Omega})}}^4  + C(\varepsilon) ||\bm w -\bU||_{L^2({\Omega})}^2 ||\nabla \bu||_{L^2({\Omega})}^2 ,
\end{align*}
where we also used \eqref{eq:Korn} and \eqref{eq:Ladyzhenskaya}. Now, \eqref{eq:Rovnost}, together with the previous estimates, leads to
\begin{align*}
 \frac{{\rm d}}{{\rm d}t} || \bm w - \bU||_H^2  + 2|| \bm w - \bU ||_V^2 
\leq  C \big[ || \bm w ||_{L^4{({\Omega})}}^4 + || \bm w ||_{L^4{({\Gamma})}}^4 \big]   +  C  ||\bm w -\bU||_{L^2({\Omega})}^2 ||\nabla \bu||_{L^2({\Omega})}^2 ,
\end{align*}
and due to Grönwall's inequality, we obtain
\begin{align*}
|| (\bm w - \bU)(t)||_H^2  
& \leq 
 C  \int\limits_0^t \Big( || \bm w ||_{L^4{({\Omega})}}^4 + || \bm w ||_{L^4{({\Gamma})}}^4 \Big) \exp \Big( C\int\limits_0^t ||\nabla \bu||_{L^2({\Omega})}^2 \Big) \\
 & \leq  Ce^{CT}  \int\limits_0^t \Big( || \bm w ||_{L^4{({\Omega})}}^4 + || \bm w ||_{L^4{({\Gamma})}}^4 \Big) ,
\end{align*}
where we used $\nabla \bu \in L^\infty (0, T; L^2({\Omega}))$ since $\bu_0 \in \mathcal{A}$. The integral over $\Omega$ in the expression above can be estimated as follows
\begin{align*}
 \int\limits_0^t ||\bm w||_{L^4({\Omega})}^4 \leq  C \int\limits_0^t ||\bm w||_{L^2({\Omega})}^2 ||\bm w||_V^2 \leq C ||\bm w_0||_H^2  \int\limits_0^t  ||\bm w||_V^2 \leq C ||\bm w_0||_H^4 ,
\end{align*}
where we used interpolation \eqref{eq:Ladyzhenskaya} and estimates \eqref{eq:ControlOfDifference1}, \eqref{eq:ControlOfDifference2}. For the boundary integral we first interpolate $L^4$ between $L^2$ and $L^{\infty}$, and then use the observation that $W^{1,2}(\Omega) \hookrightarrow L^{\infty}(\Gamma)$ (as $\Omega$ is two-dimensional). We have
\begin{align*}
 \int\limits_0^t ||\bm w||_{L^4({\Gamma})}^4 \leq  \int\limits_0^t ||\bm w||_{L^2({\Gamma})}^2 ||\bm w||_{L^{\infty}(\Gamma)}^2 \leq  C \int\limits_0^t ||\bm w||_H^2 ||\bm w||_{W^{1,2}(\Omega)}^2 \leq C ||\bm w_0||_H^4 ,
\end{align*}
where the last inequality is obtained as before. The conclusion \eqref{eq:QuasiDiff} immediately follows.
\end{proof}

Now, we introduce the so-called $N$-trace of the linearized equation:
\begin{align*}	
q(N) = \limsup_{t\to +\infty} \sup_{\bu_0 \in \mathcal{A}} \sup_{ \{\bm \varphi_j \}_{j=1}^N} \frac{1}{t} \int\limits_0^t	\sum_{j=1}^N (\mathcal{L}(\tau,\bu_0)\bm \varphi_j,\bm \varphi_j)_H \, {\rm d}\tau ,
\end{align*}
where the last supremum is taken over all families of functions $\{\bm \varphi_j \}_{j=1}^N$ from $V$, which are orthonormal in $H$. Due to quasidifferentiability of $\mathcal{S}(t)$, the quantity $q(N)$ provides an effective tool to find an upper bound on the attractor dimension. This is the subject of the following result.

\begin{proposition}\label{thm:Dimension}
Suppose that $q(N) \leq f(N)$, where $f(\cdot)$ is a concave function, and $f(d)=0$ for some $d>0$. Then $\dim_H^f \mathcal{A} \leq d$.
\end{proposition}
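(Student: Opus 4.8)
The plan is to recognize this as the abstract volume-contraction criterion of Constantin--Foias--Temam (see \cite{Te97}, \cite{Rosa}, \cite{ChepIl04}), all of whose hypotheses are now in place: $\mathcal{A}$ is compact and invariant, and by \Cref{thm:Differentiability} the semigroup $\mathcal{S}(t)$ is uniformly quasidifferentiable on $\mathcal{A}$ with quasidifferential $\mathcal{L}(t,\bu_0)$. The whole argument reduces to translating the bound $q(N)\le f(N)$ into non-expansion of $d$-dimensional volumes under the linearized flow \eqref{eq:lin-rce}, and then feeding this into a covering estimate for $\mathcal{A}$.

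For $\bu_0\in\mathcal{A}$ and $\bm \varphi_1,\dots,\bm \varphi_N\in V$ orthonormal in $H$, I would let $\bm U_j(\tau)$ solve \eqref{eq:lin-rce} with $\bm U_j(0)=\bm \varphi_j$ and set $V_N(\tau)=\|\bm U_1(\tau)\wedge\cdots\wedge\bm U_N(\tau)\|_{\Lambda^N H}$. The central step is the Liouville-type identity
\[
\frac{\mathrm d}{\mathrm d\tau}\,V_N(\tau)^2 = 2\,\mathrm{Tr}\big(\mathcal{L}(\tau,\bu_0)\circ Q_N(\tau)\big)\,V_N(\tau)^2,
\]
where $Q_N(\tau)$ is the orthogonal projection in $H$ onto $\mathrm{span}\{\bm U_1(\tau),\dots,\bm U_N(\tau)\}$. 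Integrating in time and observing that the orthonormal frame spanning the image subspace at each instant is an admissible family in the supremum defining $q(N)$, one arrives at
\[
\limsup_{t\to+\infty}\frac1t\log\omega_N(t)\le q(N),\qquad \omega_N(t)=\sup_{\bu_0\in\mathcal{A}}\ \sup_{\{\bm \varphi_j\}}V_N(t).
\]
Here the trace carries the boundary contribution of $\alpha\nabla\bm s(\bu)$ from \eqref{eq:LinearBoundary}, so one must check that $\tau\mapsto(\mathcal{L}(\tau,\bu_0)\bm \varphi_j,\bm \varphi_j)_H$ is integrable along trajectories on $\mathcal{A}$; this follows from the regularity $\bu\in L^\infty(0,T;V)$ supplied by \Cref{thm:ExistenceInfinitePlates} together with the Korn and Ladyzhenskaya estimates of Section~2.

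Next I would exploit concavity. Writing $d=N+s$ with $N=\lfloor d\rfloor$ and $s\in[0,1)$, concavity of $f$ gives $(1-s)f(N)+s\,f(N+1)\le f(d)=0$, and since $q(N)\le f(N)$ and $q(N+1)\le f(N+1)$ we obtain $(1-s)q(N)+s\,q(N+1)\le 0$. The interpolated quantity $\omega_d(t)=\omega_N(t)^{1-s}\,\omega_{N+1}(t)^{s}$ then satisfies $\limsup_t\frac1t\log\omega_d(t)\le 0$, i.e. $d$-dimensional volumes do not expand exponentially.

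Finally I would convert this into the dimension bound by the standard Douady--Oesterl\'e/Constantin--Foias--Temam covering scheme: quasidifferentiability \eqref{eq:QuasiDiff} allows one to cover the image $\mathcal{S}(t)B(a,r)$ of a small ball in $\mathcal{A}$ by a controlled number of ellipsoids whose semiaxes are governed by the singular values of $\mathcal{L}(t,\cdot)$, the product of the $d$ largest of which is essentially $\omega_d(t)$; taking $t$ large and iterating yields $N_\varepsilon\mathcal{A}\lesssim\varepsilon^{-d}$, hence $\dim_H^f\mathcal{A}\le d$. The \emph{main obstacle} is precisely this covering step: one has to make the $o(\cdot)$ in \eqref{eq:QuasiDiff} uniform over $\bu_0\in\mathcal{A}$ (using compactness of $\mathcal{A}$), control the singular-value products uniformly in the base point, and treat the borderline case $f(d)=0$, where contraction is only non-strict, by passing to $d+\delta$ and letting $\delta\to0_+$. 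Since all of this is classical for quasidifferentiable semigroups on a compact attractor, once the trace bound above is established I would simply invoke the abstract dimension theorem of \cite{Te97} or \cite{ChepIl04} to conclude.
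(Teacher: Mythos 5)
Your proposal is correct and follows essentially the same route as the paper, which simply cites the abstract volume-contraction dimension theorem (Corollary 3.1 of \cite{ChepIl04} and its remark on the non-compact case); you reconstruct the standard Constantin--Foias--Temam/Douady--Oesterl\'e argument behind that citation, including the Liouville identity, the concavity interpolation for non-integer $d$, and the covering step. Nothing in your sketch conflicts with the paper's use of the result.
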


\begin{proof}
See the comment at the beginning of the second section in \cite{ChepIl04} about the non-compact case and Corollary 3.1 therein.
\end{proof}

Before we start estimating the expression above, we rewrite it first as follows
\begin{align}	
q(N) = \limsup_{t\to +\infty} \sup_{\bu_0 \in \mathcal{A}} \sup_{ \{ \bm \varphi_j \}_{j=1}^N} \frac{2}{t} \int\limits_0^t	\sum_{j=1}^N (\mathcal{L}(\tau,\bu_0) \bm \varphi_j, \bm \varphi_j)_H \, {\rm d}\tau , \label{eq:N-trace}
\end{align}
where we now take $ \bm \varphi_j \subset V$ with $( \bm \varphi_j , \bm \varphi_i)_H = \frac{1}{2} \delta_{ij}$. The reason why we introduce rescaled $\bm \varphi_j$ is the following. As in \Cref{thm:BadKorn}, we can extend these functions into $\mathbb{R} \times (-1, 1)$ and denote them as $E\bm \varphi_j$. Then, all these $E\bm \varphi_j$ belong into $W^{1,2}_0 (\mathbb{R} \times (-1, 1))$ and every finite family $\{ E\bm \varphi_j \}_{j=1}^N$ is the so-called suborthonormal in $L^2(\mathbb{R} \times (-1, 1))$, which means that 
\begin{align*}
\sum_{i,j=1}^N \xi_i \xi_j (E\bm \varphi_i, E\bm \varphi_j)_{L^2(\mathbb{R} \times (-1, 1))}
& \leq \sum_{i=1}^N  \xi_i^2 
\end{align*}
holds for all $\{\xi_j \}_{j=1}^N \subset \mathbb{R}$, see e.g. \cite{GMT88}. It indeed holds as the ensuing calculation shows:
\begin{align*}
\sum_{i,j=1}^N \xi_i \xi_j (E\bm \varphi_i, E\bm \varphi_j)_{L^2(\mathbb{R} \times (-1, 1))}
& = \sum_{i,j=1}^N \xi_i \xi_j \int\limits_{\mathbb{R}} \int\limits_{-1}^1  E\bm \varphi_i \cdot E\bm \varphi_j 
= \sum_{i,j=1}^N \xi_i \xi_j \int\limits_{\Omega} 2\bm \varphi_i \cdot \bm \varphi_j \\
& = \sum_{i,j=1}^N \xi_i \xi_j \big( \delta_{ij} - 2\beta(\bm \varphi_i,\bm \varphi_j)_{L^2(\Gamma)} \big) 
 = \sum_{i=1}^N \xi_i^2 -  2\beta \Big|\Big| \sum_{i=1}^N  \xi_i \bm \varphi_i \Big|\Big|_{L^2(\Gamma)}^2 \\
& \leq \sum_{i=1}^N  \xi_i^2 .
\end{align*}
This property will be useful in a while thanks to the following Lieb-Thirring inequality.
\begin{proposition} \label{thm:Lieb-Thirring}
There exists a constant $\kappa$ such that for every open $\Omega \subset \mathbb{R}^2$ and every finite family $\{ \bm \varphi_j \}_{j=1}^N \subset W_0^{1,2}(\Omega)$ which is suborthonormal in $L^2 (\Omega)$ we have
 \begin{align*}
\Big|\Big| \sum_{j=1}^N |\bm \varphi_j|^2 \Big|\Big|_{L^2(\Omega)}^2 \leq \kappa  \sum_{j=1}^N  ||\nabla \bm \varphi_j ||_{L^2(\Omega)}^2  .
\end{align*}
\end{proposition}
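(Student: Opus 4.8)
The inequality to prove is the two-dimensional Lieb--Thirring inequality, so I do not expect an elementary one-line argument; the whole difficulty is that the bound is \emph{collective}. Applying the Ladyzhenskaya interpolation $\|\bm\varphi_j\|_{L^4}^2 \le C\|\bm\varphi_j\|_{L^2}\|\nabla\bm\varphi_j\|_{L^2}$ to each member separately and summing would only yield $\big\|\sum_j|\bm\varphi_j|^2\big\|_{L^2} \le C\sum_j\|\nabla\bm\varphi_j\|_{L^2}$, i.e. a bound by $\big(\sum_j\|\nabla\bm\varphi_j\|_{L^2}\big)^2$, which is worse than the claimed $\sum_j\|\nabla\bm\varphi_j\|_{L^2}^2$ by a factor $N$ (Cauchy--Schwarz). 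The suborthonormality must therefore be exploited globally, and capturing that gain is the entire point.

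First I would remove the dependence on the domain: since each $\bm\varphi_j \in W_0^{1,2}(\Omega)$, extension by zero embeds the family into $W^{1,2}(\mathbb{R}^2)$ isometrically for both sides and preserves suborthonormality, so it suffices to treat $\Omega = \mathbb{R}^2$. The vector-valued character is cosmetic: writing $\rho := \sum_{j}|\bm\varphi_j|^2$ and $\sum_j\|\nabla\bm\varphi_j\|_{L^2}^2 = \sum_j\sum_k\|\nabla\varphi_j^k\|_{L^2}^2$, the statement reduces, up to a harmless constant, to the scalar inequality for a suborthonormal system.

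The main route I would take is the duality (Legendre-transform) argument. Encode the family in the one-body operator $\gamma = \sum_j |\bm\varphi_j\rangle\langle\bm\varphi_j|$; suborthonormality says precisely $0 \le \gamma \le 1$, its density is $\rho$, and $\mathrm{Tr}(-\Delta\,\gamma) = \sum_j\|\nabla\bm\varphi_j\|_{L^2}^2$. For any potential $V \le 0$ the variational principle gives $\mathrm{Tr}\big((-\Delta+V)\gamma\big) \ge -\mathrm{Tr}(-\Delta+V)_-$, the negative of the sum of the negative eigenvalues. Feeding in the semiclassical Lieb--Thirring bound in two dimensions, $\mathrm{Tr}(-\Delta+V)_- \le L\int_{\mathbb{R}^2} V_-^{2}$, and using $\mathrm{Tr}((-\Delta+V)\gamma) = \mathrm{Tr}(-\Delta\gamma)+\int V\rho$, one gets $\mathrm{Tr}(-\Delta\gamma) \ge -\int V\rho - L\int V_-^2$. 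The choice $V = -c\,\rho$ turns this into $\mathrm{Tr}(-\Delta\gamma)\ge (c-Lc^2)\int\rho^2$, and optimizing at $c=\tfrac1{2L}$ yields $\int\rho^2 \le 4L\,\mathrm{Tr}(-\Delta\gamma)$, which is the claim with $\kappa = 4L$. Note that the universality of $\kappa$---its independence of $N$ and of $\Omega$---is automatic, since neither the variational principle nor the semiclassical bound sees $N$.

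The genuinely hard ingredient is the semiclassical eigenvalue bound $\mathrm{Tr}(-\Delta+V)_- \le L\int V_-^2$ in dimension two; this is the content I would either cite from \cite{GMT88} or, for a self-contained account, establish via Rumin's frequency-splitting method, decomposing each function into low- and high-frequency parts at a pointwise-chosen threshold and estimating the two contributions to $\rho$ separately. I emphasize that $d=2$ is borderline: the eigenvalue-\emph{counting} (Cwikel--Lieb--Rozenblum) bound fails here, so only the first moment used above survives, which is exactly why the argument must go through $\mathrm{Tr}(-\Delta+V)_-$ rather than the number of bound states.
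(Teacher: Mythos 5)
Your proposal is correct in substance, but it takes a genuinely different (and more self-contained) route than the paper, whose entire proof is a one-line citation of Corollary~4.3 in \cite{GMT88} with $m=1$, $n=k=p=2$. What you do instead is unpack the standard duality mechanism behind such collective Sobolev inequalities: encode the family in the one-body operator $\gamma=\sum_j|\bm\varphi_j\rangle\langle\bm\varphi_j|$, observe that suborthonormality is exactly $0\le\gamma\le 1$, and combine the variational principle $\operatorname{Tr}((-\Delta+V)\gamma)\ge-\operatorname{Tr}(-\Delta+V)_-$ with the two-dimensional first-moment bound $\operatorname{Tr}(-\Delta+V)_-\le L\int V_-^2$; the choice $V=-\rho/(2L)$ then yields $\int\rho^2\le 4L\operatorname{Tr}(-\Delta\gamma)$. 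This computation is correct (including the optimization $c-Lc^2=1/(4L)$ at $c=1/(2L)$), your remark that the $d=2$, $\gamma=1$ moment survives while the CLR counting bound does not is accurate, and your diagnosis that the whole point is the collective gain over the naive term-by-term Ladyzhenskaya bound is exactly right. What your route buys is transparency about where the constant comes from and which hard analytic input is really needed (the eigenvalue-moment bound, which you still cite or defer to Rumin's method); what the paper's route buys is brevity and the fact that the cited corollary already handles vector-valued families directly. One point you dismiss a bit too quickly: the reduction from vector-valued to scalar suborthonormal families is not purely cosmetic. If $\{\bm\varphi_j\}_{j=1}^N$ is suborthonormal in $L^2(\Omega;\mathbb{R}^2)$, the Gram operator of the $2N$ scalar components $\{\varphi_j^k\}$ is only controlled by $2\,\mathrm{Id}$ (its partial trace over the component index is $\le\mathrm{Id}$, not the full operator), so the component family becomes suborthonormal only after rescaling by $1/\sqrt{2}$. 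This costs merely a factor of the target dimension in $\kappa$, consistent with your ``harmless constant,'' but the verification deserves a line; alternatively, your operator-theoretic argument avoids the issue entirely if you let $\gamma$ act on $L^2(\mathbb{R}^2;\mathbb{R}^2)$ and use the vector-valued Schr\"odinger operator, at the price of another factor of $2$ in $L$.
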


\begin{proof}
Follows directly from Corollary 4.3 in \cite{GMT88}, with $m=1$ and $n=k=p=2$.
\end{proof}

Note that we can take $\kappa = \frac{1}{2\sqrt{3}}$ in the standard Dirichlet setting, where the orthonormality is considered, see \cite{IPZ2016}. We now have all the ingredients to complete the proof of the last result of our paper.

\begin{proof}[Proof of \Cref{thm:DimensionEstimate}]
We suppose that $\bm s (\bu) = 2 \bu$ (as $\nu = 1$) and the goal is to estimate \eqref{eq:N-trace}. There holds
\begin{equation*}
- (\mathcal{L}(\cdot,\bu_0) \bm \varphi_j, \bm \varphi_j)_H 
=2 ||  \bm \varphi_j ||_V^2 - \int\limits_{\Omega} ( \bm \varphi_j \cdot \nabla) \bu \cdot  \bm \varphi_j - (\bu \cdot \nabla ) \bm \varphi_j \cdot  \bm \varphi_j ,
\end{equation*}
and thus
\begin{equation*}	
\sum_{j=1}^N  (\mathcal{L}(\cdot,\bu_0) \bm \varphi_j, \bm \varphi_j)_H
\leq -  2\sum_{j=1}^N ||  \bm \varphi_j ||_V^2 + || \nabla \bu ||_{L^2(\Omega)} \Big|\Big| \sum_{j=1}^N |\bm \varphi_j|^2 \Big|\Big|_{L^2(\Omega)} .
\end{equation*}
Invoking now \Cref{thm:Lieb-Thirring} and \Cref{thm:BadKorn} we can estimate the second term as
\begin{align*}
|| \nabla \bu ||_{L^2(\Omega)} \Big|\Big| & \sum_{j=1}^N |\bm \varphi_j|^2 \Big|\Big|_{L^2(\Omega)}
\leq || \nabla \bu ||_{L^2(\Omega)} \Big|\Big| \sum_{j=1}^N |E \bm \varphi_j|^2 \Big|\Big|_{L^2(\mathbb{R}\times (-1, 1))} \\
& \leq  \frac{1}{8} \sum_{j=1}^N || \nabla (E \bm \varphi_j) ||_{L^2 (\mathbb{R} \times (-1, 1))}^2  + 2\kappa  || \nabla \bu ||_{L^2(\Omega)}^2 \\ 	
& \leq   \sum_{j=1}^N || D \bm \varphi_j ||_{L^2 (\Omega)}^2  + 2\kappa  || \nabla \bu ||_{L^2(\Omega)}^2  	
\leq   \sum_{j=1}^N || \bm \varphi_j ||_V^2 + 2\kappa  || \nabla \bu ||_{L^2(\Omega)}^2 .
\end{align*}
Together, using also \eqref{eq:EquivalenceNorms}, \eqref{eq:FirstKorn}, and $( \bm \varphi_j , \bm \varphi_i)_H = \frac{1}{2} \delta_{i, j}$, we obtain
\begin{align*}
\sum_{j=1}^N  (\mathcal{L}(\cdot,\bu_0)\bm \varphi_j,\bm \varphi_j)_H
&\leq -  \sum_{j=1}^{N} || \bm \varphi_j ||_V^2 + 2\kappa  || \nabla \bu ||_{L^2(\Omega)}^2 
\leq - \frac{1}{\Lambda} \sum_{j=1}^{N} || \bm \varphi_j ||_H^2 + 16\kappa || D \bu ||_{L^2(\Omega)}^2 \\
&\leq - \frac{1}{2\Lambda}  \sum_{j=1}^{N} 1 + 16\kappa || D \bu ||_{L^2(\Omega)}^2 
 \leq - \frac{N}{2\Lambda} + 16\kappa  || \bu ||_V^2 .
\end{align*}	
Observe now that integrating \eqref{eq:EnergyEqualitySpecific} over the time interval gives us 
\begin{align*}
2\int\limits_0^t || \bu ||_V^2  \leq t R || (\bm f, \bm h)||_H + \frac{1}{2} R^2 < \frac{t}{2}\Lambda  || (\bm f, \bm h)||_H^2 + \frac{1}{2} R^2. 
\end{align*}
Substituting all these into \eqref{eq:N-trace} we achieve 
\begin{align*}	
q(N) 
& \leq  - \frac{N}{\Lambda} + 8\kappa \Lambda || (\bm f, \bm h)||_H^2 .
\end{align*}
So, \Cref{thm:Dimension} shows that $\dim_H^f \mathcal{A} \leq 8 \kappa \Lambda^2 || (\bm f, \bm h)||_H^2$, where $\Lambda =  \frac{32}{\pi^2} + \beta \min \big\{  \frac{1}{\alpha},  8 \big\}$. This estimate was obtained for rescaled variables in the sense of \Cref{thm:NonDimensionalization}. Hence, using the last argument therein and formulas for $\alpha^*$, $\beta^*$ we find
\begin{align*}
\dim_{H_L}^f \mathcal{A} 
&\leq 8 \kappa \Big[ \frac{32}{\pi^2} + \beta^* \min \Big\{  \frac{1}{\alpha^*}  , 8 \Big\} \Big]^2 \frac{L^4}{\nu^4} || (\bm f, \bm h)||_{H_L}^2 
=  \frac{8 \kappa}{\nu^4} \Big[ \frac{32L^2}{\pi^2} + \beta \min \Big\{  \frac{1}{\alpha} , 8 L \Big\} \Big]^2 || (\bm f, \bm h)||_{H_L}^2  ,
\end{align*}
which is exactly \eqref{eq:DimensionEstimate} and the proof is complete.

\end{proof}

\begin{center}
Acknowledgement
\end{center}
I wish to thank Dalibor Pra\v{z}\'{a}k for fruitful discussions and valuable suggestions during this work.

\bibliographystyle{amsplain}
\bibliography{bibliography}

\end{document}